\documentclass[english]{amsart}

\usepackage{amssymb}
\usepackage{amsthm}
\usepackage{amsmath}
\usepackage[all]{xy}
\usepackage{color}
\usepackage{mosaicosPBC}
\usepackage{subfigure}
\usepackage{float}
\xyoption{arc}
\usepackage{microtype}
\usepackage{accents}


\newcommand{\red}{\color{red}}

\newcommand{\tq}{\mid}
\newcommand{\Z}{\mathbb{Z}}
\newcommand{\R}{\mathbb{R}}

\newcommand{\N}{\mathbb{N}}
\newcommand{\T}{\mathit{\mathbb{T}}}
\newcommand{\X}{\mathbb{X}}
\newcommand{\Pa}{\mathbb{P}}
\newcommand{\Q}{\mathbb{Q}}
\newcommand{\B}{\mathbb{B}}
\newcommand{\M}{\mathbb{M}}
\newcommand{\calc}{\mathcal{C}}
\newcommand{\cald}{\mathcal{D}}
\newcommand{\calp}{\mathcal{P}}
\newcommand{\calb}{\mathcal{B}}
\newcommand{\calt}{\mathcal{T}}
\newcommand{\calk}{\mathcal{K}}

\newcommand{\calr}{\mathcal{R}}
\newcommand{\calf}{\mathcal{F}}
\newcommand{\calh}{\mathcal{H}}
\newcommand{\pp}{\Pa'}
\newcommand{\ppp}{\Pa''}
\newcommand{\pb}{\calb'}
\newcommand{\ppb}{\calb''}
\newcommand{\cpartial}{\check{\partial}}
\newcommand{\scirc}{{\scriptstyle \circ}}
\newcommand{\norm}[1]{\lVert #1 \rVert}

\newcommand{\inte}[1]{\mathaccent23{#1}}
\newcommand{\card}{\#}      
\DeclareMathOperator{\area}{area}  
\DeclareMathOperator{\longi}{length}  

\newlength{\dhatheight}

\newtheorem{theorem}{Theorem}[section]

\newtheorem{lemma}[theorem]{Lemma}
\newtheorem{proposition}[theorem]{Proposition}

\theoremstyle{definition}
 \newtheorem{definition}[theorem]{Definition}

\newtheorem{remark}[theorem]{Remark}

\begin{document}

\title[A geometric proof of the Affability Theorem]{A geometric proof of the Affability Theorem for planar tilings}

\author[F. Alcalde]{Fernando Alcalde Cuesta} 
\address{Departamento de Xeometr\'{\i}a e Topolox\'{\i}a \\ Facultade de Matem\'aticas \\
Universidade de Santiago de Compostela \\ R\'ua Lope G\'omez de Marzoa s/n \\ 
E-15782 Santiago de Compostela (Spain)}
\email{fernando.alcalde@usc.es}

\author[P. G. Sequeiros]{Pablo Gonz\'alez Sequeiros}
\address{Departamento de Did\'actica das Ciencias Experimentais \\ Facultade de Formaci\'on do Profesorado \\ 
Universidade de Santiago de Compostela \\  Avda. Ram\'on Ferreiro, 10 \\ E-27002 Lugo (Spain)}
\email{pablo.gonzalez.sequeiros@usc.es}

\author[\'A. Lozano]{\'Alvaro Lozano Rojo}
\address{Centro Universitario de la Defensa - IUMA Universidad de Zaragoza \\
Academia General Militar \\ Ctra. Huesca s/n \\ E-50090 Zaragoza (Spain)}
\email{alvarolozano@unizar.es}

\date{\today}
\thanks{Partially supported by the Ministry of Science and Innovation - Government of Spain (Grant 
\\  \hspace*{1em} MTM2010-15471), the University of the Basque Country (Grant EHU09/04) and the Xunta de \\  \hspace*{1em}  Galicia (IEMath Network CN 2012/077).}

\keywords{tilings, equivalence relations, laminations}

\subjclass[2010]{37A20, 43A07, 57R30}

\begin{abstract}
We give a geometric proof of the Affability Theorem of  T. Giordano, H. Matui, I. Putnam and  C. Skau for aperiodic and repetitive planar tilings. 
\end{abstract}

\maketitle


\section{Introduction}
\label{introduccion}

In this paper, we give a more accessible proof of a deep theorem by T. Giordano, H. Matui, I. Putnam and  C. Skau \cite{GMPS1} on the orbit structure of minimal dynamical systems on the Cantor set. We combine the absorption techniques from \cite{GMPS2} with some new ideas and techniques, which are inspired by the famous tiling constructed by R. M. Robinson \cite{Ro} to disprove the Wang conjecture about the decidability of the Tiling Problem \cite{GS}. 
\medskip 

For us, a dynamical system is an action of a countable group of transformations, or more generally an \'etale equivalence relation (EER). Recall that 
an equivalence relation $\calr$ on a second-countable locally compact Hausdorff space $X$ is said to be {\em \'etale} if $\calr$ admits a topology that makes it a locally compact Hausdorff $r$-discrete groupoid, so the projection maps $r,s : \calr \to X$ are local homeomorphisms. The orbit equivalence relation $\calr = \{ \, (x,g.x) \in X \times X \, | \, x \in X , g \in G \, \}$ defined by a countable discrete group $G$ acting continuously on $X$ is the basic example of EER. 
\medskip 

From a dynamical point of view, a main problem is to determine when two EERs $\calr$ on $X$ and $\calr'$ on $X'$ are orbit equivalent. As for group actions, we say $\calr$ and  $\calr'$ are {\em orbit equivalent } (OE) if there is a homeomorphism \mbox{$\varphi : X \to X'$} such that 
\mbox{$(\varphi \times \varphi)(\calr) = \calr'$}. 
In measurable dynamics, the study of orbit equivalence was initia\-ted by H. A. Dye \cite{D1,D2} for group actions.
Pursued by W. Krieger \cite{K}, J. Feldman and D. A. Lind \cite{FL}, and 
D. S. Ornstein and B. Weiss \cite{OW1} among others authors, the idea consisted of finding a Rohlin approximation (similar to the approximation by periodic transformations of any aperiodic nonsingular transformation of a probability space) for the group. An analogous method was used by C. Series \cite{Ser} to show that any measurable equivalence relation $\calr$ with polynomial growth is {\em hyperfinite}. This means that  $\calr$ is the increasing union of countably many finite  equivalence relations on a full measure set, or equivalently $\calr$ is defined by a measurable $\Z$-action. 
The equivalence between hyperfiniteness and amenability was finally proved by A. Connes, J. Feldman and B. Weiss 
in their celebrated paper \cite{CFW}.
\medskip 

In the topological setting, the solution to the same problem is more subtle. Firstly, ergodicity for measurable equivalence relations is replaced by minimality of EERs, which means that all equivalence classes are dense. Moreover, since any orbit equivalence reduces to an isomorphism for connected spaces, we can focus on minimal dynamical systems defined on totally disconnected spaces.  According to a strategy drawn by Giordano, Putnam and Skau in a series of papers \cite{GPS0,GPS} previous to the aforementioned \cite{GMPS1}, the idea consists of approaching minimal EERs by finite equivalence relations 
and providing an invariant of orbit equivalence being complete for these approximate finite (AF) equivalence relations. Following \cite{GPS}, an EER $\calr$ on $X$ is said to be {\em affable}  if $\calr$ is OE to an AF equivalence relation. Since the classification was completed in \cite{GPS0}, the main problem remains to prove that any minimal amenable EER on a totally disconnected compact space is affable \cite{GPS}. For the amenability of \'etale groupoids and minimal laminations, see \cite{AnR} and \cite{AR} respectively.
\medskip 

The case of $\Z$-actions was studied in \cite{GPS}. In this paper, the authors show any AF equivalence relation on the Cantor set is represented by a combinatorial object, called a {\em Bratteli diagram}, providing an orbit equivalence with a Cantor minimal $\Z$-system. Reciprocally, any EER arising from a Cantor minimal $\Z$-system is affable. To prove this result, they introduce the idea of a \lq small\rq~extension of a minimal AF equivalence relation, which is obtained by a sort of Rohlin approximation and described in a pure combinatorial way. The same ideas and techniques was used in the remarkable papers \cite{GMPS1} and \cite{GMPS3} to prove that minimal free $\Z^2$-actions and 
$\Z^m$-actions on the Cantor set are OE to 
minimal free $\Z$-actions. Nevertheless, the first step of the proof also involves subtle geometrical and combinatorial arguments in both cases $m=2$ and $m >2$. These 
methods are combined with an important result regarding the extension of minimal AF equivalence relations, called the Absorption Theorem \cite{GMPS2}, whose first version was stated and proved 
in \cite{GPS}.
\medskip

Cantor dynamical systems appear naturally in the study of laminations defined by tilings and graphs \cite{G}. In fact, the geometry of tilings plays a important role in the proof of the Affability Theorems in \cite{GMPS1} and \cite{GMPS3}. Any repetitive planar tiling satisfying a finiteness pattern condition 
defines an interesting minimal dynamical system on a compact connected space \cite{BBG,G}, which is said to be a {\em tiling dynamical system} (TDS).  If we choose a point on each tile, and we force the tiling to have the origin on one of these points,  we obtain a  
totally disconnected closed subspace $X$. Such a subspace meets all the orbits, and the TDS induces a minimal EER $\calr$ on $X$.  
For repetitive planar tilings, $\calr$ is OE to a minimal free $\Z^2$-action. Reciprocally, using the classical suspension method \cite{GMPS2}, the orbit equivalent relation defined by any Cantor minimal $\Z^2$-system can be realized as the EER induced by a minimal dynamical 
$\R^2$-system.
However, when we restrict $\calr$ to any clopen subset,  we need that each induced class to be realized as base point (Delone) set of 
some Voronoi tiling. 
In fact, the proof of  the Affability Theorem for $m=2$ involves a fine control on the geometry of Voronoi tiles  \cite{GMPS1}. For dimension $m >2$, the same authors have had to modify their geometrical arguments on Voronoi tilings, as well as the combinatorial ones related with the Rohlin approximation \cite{GMPS3}. 
\medskip

Our purpose in this paper is to simplify this part of the proof of the Affability Theorem for aperiodic and repetitive planar tilings. We focus our attention on the $2$-dimensional case to facilitate intuition and to make more accessible the proof, but the first advantage of our approach is that there is no fundamental difference between the cases $m=2$ and $m >2$.  Moreover, since Voronoi tiling are not involved in our proof, it might make easier to extend it to aperiodic tilings for other amenable Lie groups as the Heisenberg group $H^3$ or the solvable group $Sol^3$. We believe that any Cantor dynamical system arising from an aperiodic and repetitive tiling of the nilpotent group $H^3$ is affable, since there is no significant differences with the abelian case $m =3$. But we do not know the answer for $Sol^3$ which admit a solvable cocompact discrete subgroup with exponential growth.
\medskip

One of the first and most remarkable examples in the theory of aperiodic tilings was constructed by R. M. Robinson \cite{Ro} from a set of $32$ aperiodic prototiles, $6$ up to isometries of the plane. For any repetitive Robinson tiling, there is a Borel isomorphism from the space of sequences of $0$s and $1$s equipped with the cofinal equivalence relation to a full measure and residual set of orbits of the corresponding TDS. Since all cofinality classes are orbits of a $2$-adic adding machine, except for one orbit that is the union of two cofinality classes, this Cantor minimal dynamical system is measurewise OE to a $2$-adic adding machine. By construction, any repetitive Robinson tiling is the increasing union of patches which are inflated from patches at the previous step. So this inflation process gives us a finite approximation of the corresponding orbit equivalence relation $\calr$ by a minimal open AF equivalence subrelation $\calr_\infty$. In this example, it is quite easy to see how \lq small\rq~is the difference between $\calr$ and $\calr_\infty$. We can actually identify the boundary set $\partial \calr_\infty$, made up of the points in the boundary of the $\calr_\infty$-classes into the $\calr$-classes. Then we can deduce that $\partial \calr_\infty$ is indeed \lq small\rq~(in the sense that is specified in \cite{GPS}) from a growth argument. By using the Absorption Theorem \cite{GMPS2},  we can obtain an orbit equivalence with a Cantor minimal $\Z$-system. 
\medskip 

\begin{figure}
\includegraphics[width=50mm]{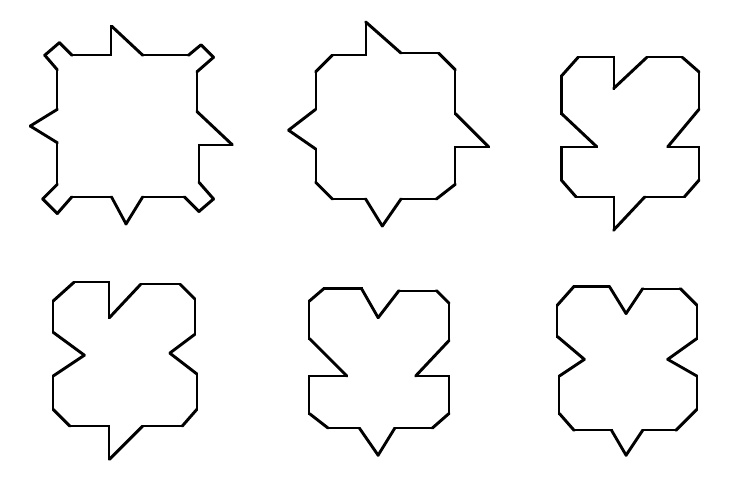}
\caption{\label{fig:arbolescantor} Robinson's aperiodic tiles}
\end{figure}

In this paper, we make use of Robinson's example to define a special inflation process, which we call {\em Robinson inflation}, verifying most of the properties needed to apply absorption techniques from \cite {GMPS2}. In \cite{AGL}, we announced a proof of the Affability Theorem where we used a former version of the Absorption Theorem. Unfortunately, there are two gaps in the proof of Lemma 3.1 and Theorem 5.1, which we close now by defining this new inflation process. Anyway, we shall keep the general schema given in \cite{AGL}. Thus, denoting by 
$\calr$ the EER induced by a TDS on any closed subset $X$ determined by the choice of a base point in each tile, we distinguish three  steps:

\begin{list}{--}{\leftmargin=1em}

	\item The first step consist of applying the \emph{inflation} or \emph{zooming process} of \cite{BBG}  to obtain an open AF equivalence subrelation $\calr_\infty$ of $\calr$. 

	\item In the second step, we introduce the \emph{discrete boundary} $\partial \calr_\infty$ of $\calr_\infty$, as well as its {\em continuous} or 
		geometrical counterpart $\partial _c \calr_\infty$.

	\item In the third step, we review all the conditions needed to apply the Absorption Theorem \cite {GMPS2}. 

\end{list}
These general steps are grouped in the first part of the paper developed in Section~\ref{schema}. In Section~\ref{buenainflacion}, we describe Robinson inflation, allowing us to deal with the third part in Sections~\ref{sboundary},~\ref{filtrating}~and~\ref{absorbing}. We start by reducing planar tilings to square tilings according to a theorem by  L.~Sadun and R.~F.~Williams \cite{SW}. At each iteration of the inflation process, the new inflated tiling is still of finite type, although the prototiles become more and more complicated and highly nonconvex (as a complicated version of the Amman-Penrose tiles  described in \cite{G} and \cite{GS}). But they look as squares on a large scale, so the isoperimetric ratios of any nested sequence of tiles converge to $0$.
At the beginning of Section~\ref{sboundary}, using this fundamental result, we prove that the whole boundary is \lq small\rq. In this Section~\ref{sboundary}, we show another important property of the boundary. As before, when we consider any planar orbit of the TDS, it is very complicate to know the real look of the trace of the continuous boundary $\partial _c \calr_\infty$ (which splits the orbit into several regions corresponding to $\calr_\infty$-classes). But once again, by looking on a large scale, we can see that every $\calr$-equivalence class separates into at most four $\calr_\infty$-equivalence classes. However, since the boundary is not $\calr_\infty$-\'etale, it does not suffice to apply the Absorption Theorem of \cite{GMPS2}. 
 This problem is solved in a somewhat different way than in \cite{GMPS1} by coloring the tiles of all the inflated tilings. Notice, however, that this trick (which we develop in Section~\ref{filtrating} to make \'etale the original boundary) has the similar effet as that applied in \cite{GMPS1}, although it is not always possible to obtain an OE between $\calr$ and $\calr_\infty$.
Finally, using this idea, the absorption of the boundary is finally accomplished in Section 7. 

\medskip 
 
To conclude,
we hope that our method helps to achieve the goal stated in \cite{GPS} 
(where the problem is formulated using group actions instead equivalence relations) by proving that the orbit equivalence relation defined by a Cantor minimal dynamical system with subexponential growth is affable. More specifically, we think that Series' ideas may be extremely useful to solve the polynomial growth case.
 
\section{Tiling dynamical systems and  \'etale equivalence relations}
\label{afables}

In this section, we present the two basic notions of the paper, {\em tiling dynamical systems} (TDS) and {\em \'etale equivalence relations} (EER), although frequently we do not distinguish both concepts. 
\medskip 

A \emph{planar tiling} is a partition of $\R^2$ into polygons touching face-to-face, called \emph{tiles}, obtained by translation from a finite set of \emph{prototiles}. Such a tiling always satisfies the \emph{Finite Pattern Condition} given in \cite{BBG}. It is said to be \emph{aperiodic} if it has no translation symmetries, and  \emph{repetitive} if for any patch $M$, there exists a constant $R  > 0$ (depending only on the diameter of $M$) such that any ball of radius $R$ contains a translated copy of $M$. 
\medskip 

Let $\T(\calp)$ be the set of tilings $\mathcal T$ obtained from a finite set of prototiles $\mathcal P$. It is possible to endow $\T(\calp)$ with the \emph{Gromov-Hausdorff topology} \cite{BBG,G}  generated by the basic neighborhoods
\[
	U^r_{\varepsilon,\varepsilon'} =
		\{\, \calt'  \in \T(\calp)  \tq
		\exists \, v,v' \in \R^2 : \norm{v} < \varepsilon , \norm{v'} < \varepsilon', R(\calt+v,\calt'+v') > r \,\},
\]
where $R(\calt,\calt')$ is the supremum of radii $R>0$ such that $\calt$ and $\calt' $ coincide on the ball $B(0,R)$. Then $\T(\calp)$ becomes a compact metrizable space, which is naturally laminated by the orbits $L_\calt$ of the natural ${\mathbb R}^2$-action  by translation. 
For each $\calt \in \T(\calp)$, we denote by $\cald_\calt$ the Delone set determined by the choice of base points in the prototiles. Now $T  = \{\, \calt \in \T(\calp) \tq 0 \in \cald_{\calt} \,\}$ is a totally disconnected closed subspace which meets all the leaves, so $T$ is a \emph{total transversal} for $\T(\calp)$. 
\medskip 

If $\calt \in \T(\calp)$ is a repetitive tiling, then $\X = \overline{L}_\calt$ is a minimal closed subset of $\T(\calp)$, called the \emph{continuous hull of $\calt$}. If $\calt$ is also aperiodic, any tiling in $\X$ has the same property  and hence $X = T \cap \X$ is homeomorphic to the Cantor set. Then $\calr = \{\, (\calt,\calt-v) \in X \times X \tq v \in \cald_\calt \,\}$  is an EER on $X$, which completely represents the transverse dynamics of the TDS.. 
\medskip

Let $\calr$ be any EER on a second countable locally compact Hausdorff space $X$. Following \cite{GPS}, we say
that $\calr$ is a \emph{compact \'etale equivalence relation} (CEER) if
$\calr - \Delta_X$ is a compact subset of $X \times X$ (where $\Delta_X$ is the
diagonal of $X \times X$). This means that $\calr$ is \emph{proper} in the sense of \cite{Re}
and  trivial out of a compact set.

\begin{definition}[\cite{GPS}]
	An equivalence relation $\calr$ on a totally disconnected space $X$ is \emph{affable}
	if there exists an increasing sequence of CEERs $\calr_n$ such that
	$\calr=\bigcup_{n\in\N}\calr_n$.  The inductive limit topology turns $\calr$ into an
	EER and we say that $\calr=\varinjlim \calr_n$ is \emph{approximately finite} (or AF). 
\end{definition}

An example of AF equivalence relation is the cofinal equivalence relation on
the infinite path space of certain type of oriented graphs $(V,E)$, called
\emph{Bratteli diagrams}.  According to \cite{GPS0} and \cite{GPS}, their vertices are stacked on levels and their edges join two consecutive levels. More precisely, we denote by $V_n$ the set of vertices of the level $n$ and by $E_n$ the set of edges $e$ with origin $s(e)$ in $V_{n-1}$ 
and endpoint  $r(e)$ in $V_n$ in such a way that $V = \bigcup V_n$ and $E = \bigcup E_n$. In fact, they are actually the only examples of AF equivalence
relations:

\begin{theorem}[\cite{GPS}, \cite{Re}] 
	Let $\calr$ be an AF equivalence relation on a totally disconnected space $X$. There
	exists a Bratteli diagram $(V,E)$ such that $\calr$ is isomorphic to the tail
	equivalence relation on the infinite path space 
	\[
		X_{(V,E)} =
			\{\, (e_1,e_2,\dots) \tq e_i \in E_i, r(e_i) = s(e_{i+1}), \forall i \geq 1 \,\}
	\]
	given by 
	\[
		\calr_{cof} =
			\{\, ((e_1,e_2,\dots), (e'_1,e'_2,\dots)) \in X_{(V,E)} \times X_{(V,E)} \tq
														\exists \, m \geq 1  :  e_n = e'_n , \forall \, n \geq m \,\}.
	\]
	If $X$ is compact, then $(V,E)$ can be chosen \emph{standard}, i.e. $V_0 = \{v_0\}$ and
	$r^{-1}(v) \neq \emptyset$ for all $v \in V-\{v_0\}$. Furthermore, $\calr$ is minimal
	if and only if $(V,E)$ is \emph{simple}, i.e. for each $v \in V$, there is $m \geq 1$ such that all vertices in $V_m$ are reachable from $v$.  \qed
	
	\end{theorem}

As for the continuous hull of a Robinson repetitive tiling, the cofinal
equivalence relation on the infinite path space of a simple \emph{ordered} Bratteli
diagram (i.e. having a linear order on each set of edges with the same endpoint) is
essentially isomorphic to a Cantor minimal $\Z$-system. Indeed, by using
lexicographic order on cofinal infinite paths and sending the unique maximal path to
the unique minimal path, we have a minimal homeomorphism 
$\lambda_{(V,E)} : X_{(V,E)} \to  X_{(V,E)}$, called a \emph{Vershik map}, see \cite{GPS0, GPS}. This map preserves cofinality, except for the maximal and minimal paths. We 
refer to the corresponding dynamical system as the \emph{Bratteli-Vershik $\Z$-system}
associated to $(V,E)$. Now,  as proved in \cite{GPS}, it is not difficult to see that any minimal AF equivalence relation on the Cantor set $X$ is OE to its Bratteli-Vershik $\Z$-system.

\section{Theorem statement and proof schema}
\label{schema} 

In this section, we  describe the general schema of the proof of the main result: 

\begin{theorem}[{\bf Affability Theorem}, \cite{GMPS1}]
	\label{affabilitytheorem}
	The continuous hull of any aperiodic and repetitive planar tiling is affable. 
\end{theorem}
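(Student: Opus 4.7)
My plan is to construct an exhausting sequence of finite subequivalence relations $\calr_n \subset \calr$, each coming from a Kakutani-Rokhlin partition of the transversal $X$ associated to a sequence of increasingly coarse patch decompositions of $\calt$, and then to show that after a controlled modification on small clopen sets the inductive limit is an AF equivalence relation orbit equivalent to $\calr$.

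First, the geometric step: by repetitivity and the Finite Pattern Condition, choose radii $r_1 < r_2 < \dots \to \infty$ and, for each $n$, a Delone subset $D^{(n)}_\calt \subset D_\calt$ whose points are pairwise at distance at least $r_n$ and at most $R_n$ apart. The Voronoi cells of $D^{(n)}_\calt$ form a coarser tiling whose ``super-tiles'' are finite unions of original tiles; by the Finite Pattern Condition only finitely many super-prototiles occur. This coarsening is still repetitive and aperiodic and extends continuously to every $\calt' \in \X$, yielding for each $n$ a clopen Kakutani-Rokhlin partition of $X$ whose columns are indexed by the super-prototiles at level $n$ and whose ``floors'' record the position of $0$ in the super-tile containing it. I then define $\calr_n$ on $X$ by declaring $\calt'_1 \sim_n \calt'_2$ whenever $\calt'_2 = \calt'_1 - v$ for some $v \in D_{\calt'_1}$ lying in the same level-$n$ super-tile as $0$; each $\calr_n$ is a compact open subequivalence relation of $\calr$ with finite classes.

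Because level-$(n+1)$ super-tiles are approximately unions of level-$n$ super-tiles in their interior, the inclusion $\calr_n \subset \calr_{n+1}$ holds \emph{away from the boundaries} of the level-$(n+1)$ super-tiles, and $\bigcup \calr_n$ recovers $\calr$ on interior points; by choosing $R_n/r_n$ growing slowly the boundary regions can be made topologically and measure-theoretically small. The main obstacle --- and the source of the technical difficulty in \cite{GMPS1} --- is precisely that in the plane the boundary of a level-$(n+1)$ super-tile crosses many level-$n$ super-tiles, so the naive inclusion $\calr_n \subset \calr_{n+1}$ fails on the boundary columns. My plan to repair this is a boundary absorption inside $\calr$: on a small clopen neighbourhood of each boundary seam, replace $\calr_n$ by a finite subrelation that re-matches boundary base points to nearby interior base points. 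If these modifications are chosen compatibly across scales, the modified sequence $\tilde{\calr}_n$ satisfies $\tilde{\calr}_n \subset \tilde{\calr}_{n+1}$ exactly, $\bigcup \tilde{\calr}_n$ is AF, and a Borel orbit equivalence argument --- using that the un-modified region exhausts $X$ up to a saturated negligible set --- shows that this AF relation is orbit equivalent to $\calr$. The ``simple'' aspect of the proof, compared with \cite{GMPS1}, should come from exploiting the planar patch geometry directly (connectedness and linear growth of super-tile boundaries) in place of the abstract absorption theorems developed there.
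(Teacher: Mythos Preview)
Your outline has a genuine gap at the repair step, and it rests on a misreading of what is ``simple'' in the paper's proof. The paper does \emph{not} avoid the Absorption Theorem of \cite{GMPS2}; it invokes it exactly as \cite{GMPS1} does. The simplification lies entirely in the geometry of the inflation: after reducing (via Sadun--Williams) to marked unit squares, the paper runs a special ``Robinson inflation'' built from maximal packings of $N_n\times N_n$ squares together with controlled \emph{arms} and \emph{crosses}. The whole point of this construction is to force every $\calr$-class to split into \emph{at most four} $\calr_\infty$-classes (Proposition~\ref{finiteness}). With that uniform bound, the boundary $\partial\calr_\infty$ is filtered by countably many closed $\calr_\infty$-thin sets, each carrying a CEER transverse to $\calr_\infty$, and the Absorption Theorem is applied inductively to these. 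No convexity argument is needed, but absorption is still the engine.

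Your proposed replacement for absorption --- ``re-match boundary base points to nearby interior base points on small clopen sets, compatibly across all scales'' --- is exactly the step that nobody knows how to carry out directly, and you give no mechanism for it. Two concrete obstructions: first, $\partial\calr_\infty=\bigcap_n\partial\calr_n$ is a nonempty \emph{closed} subset of $X$, so the ``un-modified region'' never exhausts $X$ topologically; you cannot push the defect into a clopen set that shrinks to nothing. Second, with a generic Voronoi-type inflation there is no a priori bound on how many $\calr_\infty$-classes a single $\calr$-class may contain (the paper notes this number can be infinite in general), so the residual relation you must glue on is not a CEER and there is nothing to absorb. Your final appeal to a \emph{Borel} orbit equivalence is also insufficient: affability is a topological notion, and a Borel orbit equivalence to an AF relation does not yield it.
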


\noindent
As explained above, we distinguish three steps: 
\begin{list}{\labelitemi}{\leftmargin=1.5em}
	\item[1)] The first step consist of applying the \emph{inflation} or \emph{zooming process}
		developed in \cite{BBG}  to obtain an increasing sequence of CEERs $\calr_n$, 
		and thus an open AF equivalence subrelation $\calr_\infty= \varinjlim \calr_n$ of $\calr$. 

	\item[2)] In the second step, we define a \emph{discrete boundary $\partial \calr_\infty$
	          of $\calr_\infty$} and we study its properties. It is a nonempty meager closed subset of 
	          $X$ whose saturation contains all the points with $\calr_\infty$-equivalence class 
	          different from its $\calr$-equivalence class.

	\item[3)] In the third step, to apply the \emph{Absorption Theorem} of \cite{GMPS2}, we must distinguish three other steps: 
		
		\begin{list}{--}{\leftmargin=1em}
		
		\item Firstly, the discrete boundary $\partial \calr_\infty$ Section
		must be \emph{$\calr_\infty$-thin} in the sense of \cite{GPS}. This means that 
		$\mu(\partial\calr_\infty)=0$ for every $\calr_\infty$-invariant  probability 
		measure $\mu$. It is also important to show that all $\calr$-equivalence classes 
		split into a (uniformly bounded) finite number of $\calr_\infty$-equivalence classes. In Section~\ref{buenainflacion}, 
		we present a special inflation process, which we call \emph{Robinson inflation}, allowing us to construct a sequence of 
		transverse CEERs with these properties (which will be proved in Section~\ref{sboundary}). However, in order to apply absorption 
		techniques from \cite{GMPS2}, we also need to see that $\partial \calr_\infty$ is \emph{$\calr_\infty$-\'etale} (i.e. 
		the equivalence relation induced by $\calr_\infty$ on $\partial \calr_\infty$ is \'etale), but it is not true. 
		\item In the second step, 
		which occupies the whole Section~\ref{filtrating}, we replace $\calr_\infty$ with a minimal open AF subrelation 
		$\hat{\calr}_\infty$ such that $\partial \calr_\infty$ is $\hat{\calr}_\infty$-\'etale. 
		
		\item
		In the last step, corresponding to 
		Section~\ref{absorbing}, we finally apply the required techniques for the boundary absorption. 
		\end{list}
\end{list}
\subsection{Inflation} 
\label{inflacion} 

We start by recalling the general \emph{inflation process}
developed in \cite{BBG}. By definition of its topology, the continuous hull
$\X$ admits a \emph{box decomposition} $\calb=\{ \B_i\}_{i =1}^k$ consisting of closed flow boxes $\varphi_i : \B_i \rightarrow \Pa_i \times X_i$ such that $\X = \bigcup_{i =1}^k \B_i$ and 
$\inte{\B}_i \cap \inte{\B}_j = \emptyset$ if $i \neq j$. In this context, we can also assume that the plaque 
$\Pa_i$ is a  $\calp$-tile and the change of coordinates is given by 
$$
\varphi_i\scirc\varphi_j^{-1}(x,y) =
\bigl(\varphi_{ij}(x),\gamma_{ij}(y)\bigr),
$$
where the map $\varphi_{ij}$ is a translation from an edge of $\Pa_j$ to an edge of $\Pa_i$. 
In general, any box decomposition is said to be
\emph{well-adapted} (to the $\calp$-tiled structure)  if each plaque $\Pa_i$  is a $\calp$-patch and the
associated total transversal  $\bigsqcup_{i=1}^k X_i$ is a clopen subset
of $X$. For any flow box $\B_i$ in $\calb$, the set $\partial_v \B_i =\varphi_i^{-1}(\partial \Pa_i \times X_i)$ is called the \emph{vertical boundary} of  $\B_i$.

\begin{theorem}[\cite{BBG}]
	\label{thinflac}
	Let  $\X$ be the continuous hull of an aperiodic and repetitive Euclidean
	tiling satisfying the finite pattern condition. Then, for any well-adapted flow box decomposition
	$\calb$ of $\X$, there exists another well-adapted flow box decomposition
	$\mathcal{B'}$ \emph{inflated from $\calb$} having the following properties:
	\medskip 

	\noindent
	i) for each tiling $\calt$  in a box $\B \in \calb$ and in a box
	$\B' \in \mathcal{B}'$, the transversal of $\B'$ through $\calt$ is contained in
	the corresponding transversal of $\B$;
	\medskip

	\noindent
	ii) the vertical boundary of the boxes of $\mathcal{B}'$ is contained in the
	vertical boundary of boxes of $\calb$;
	\medskip 

	 \noindent
	iii) for each box $\B' \in \mathcal{B}'$, there exists a box $\B \in \calb$ such
	that $\B \cap \B' \neq \emptyset$ and $\B \cap \partial_v \B' = \emptyset$. \qed
\end{theorem}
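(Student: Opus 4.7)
The approach I would take follows the spirit of the zooming construction of Benedetti and Gambaudo in \cite{BBG}, exploiting the finite pattern condition and repetitivity to build a coarser, larger-scale decomposition on top of the given $\calb$. First I would choose a radius $R$ so large that every ball of radius $R$ in any tiling $\calt \in \X$ contains a translated copy of each $\calb$-plaque; repetitivity together with the finite pattern condition ensures such an $R$ exists, and moreover only finitely many combinatorial types of ``$R$-neighborhoods'' of base points in the total transversal $\bigsqcup_i X_i$ can occur.

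Next, I would classify each base point $x \in \bigsqcup_i X_i$ according to the combinatorial type of the configuration of $\calb$-plaques meeting $B(x,R)$ in the tiling seen at $x$. This partitions the total transversal into finitely many clopen classes. For each class I would make a once-and-for-all choice of a distinguished \emph{cluster center} inside the pattern --- a preferred base point determined purely by the local combinatorics --- and then aggregate the $\calb$-plaques attached to base points lying in the prescribed neighborhood of each such center into a single new plaque $\Pa'$. The new total transversal $\bigsqcup_j X'_j$ is then the clopen set of selected centers, and the new flow boxes $\B'$ are obtained from the obvious charts inherited from those of $\calb$.

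The three conclusions now follow in turn. Property (i) holds because each new center is itself a $\calb$-center, so the transversal of $\B'$ through $\calt$ is a subset of the transversal of the corresponding $\B$. Property (ii) holds because each new plaque is by construction a finite union of $\calb$-plaques, so its boundary lies along segments of $\calb$-boundaries. Property (iii) is ensured by taking $R$ sufficiently large that the cluster around each new center properly contains at least one $\calb$-plaque strictly in its interior --- this is the reason for choosing the inflation scale with a safety margin beyond the diameters of the $\calb$-plaques.

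The main obstacle I anticipate is making the cluster-center assignment canonical, continuous and translation-equivariant across the whole of $\X$. A naive rule (say, lexicographically leftmost base point) can be discontinuous in the Gromov--Hausdorff topology at tilings where the local pattern is on the boundary between two classes, and one must invoke aperiodicity to rule out internal symmetries of a patch that would render the choice of center ambiguous. I would handle this by enumerating the finite list of $R$-pattern types, fixing once and for all a preferred center in each, and then verifying continuity of the resulting global center map via the finite pattern condition; aperiodicity guarantees that no patch matches a nontrivial translate of itself within the chosen scale, so the assignment is globally well-defined.
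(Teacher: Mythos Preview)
The paper does not supply its own proof of this theorem: it is quoted verbatim from \cite{BBG} and closed with a \qed. Your sketch is therefore being measured against the Bellissard--Benedetti--Gambaudo zooming construction rather than anything argued in the present paper, and in broad outline you are indeed following their scheme.

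That said, there is a genuine gap in your aggregation step. You propose to classify base points by the combinatorial type of their $R$-neighborhood, fix a distinguished ``cluster center'' in each type, and then ``aggregate the $\calb$-plaques attached to base points lying in the prescribed neighborhood of each such center.'' But this does not yet yield a box decomposition: you have not specified which base points actually become centers of $\calb'$, nor why every $\calb$-plaque is assigned to \emph{exactly one} such center. If two selected centers lie within distance $R$ of one another their prescribed neighborhoods overlap, and a plaque in the overlap is claimed by both; conversely, nothing in your rule forces the selected centers to be relatively dense, so some plaques may lie in no center's neighborhood at all. The substantive content of the inflation construction in \cite{BBG} is precisely a mechanism (a hierarchical or Voronoi-type assignment, made transversely continuous via the finite pattern condition) that converts a locally defined preference into an honest partition of each leaf into new patches. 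The continuity worry you flag at the end is legitimate, but it is downstream of this more basic well-definedness issue, which your sketch leaves unaddressed.
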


By applying this theorem inductively, we have a sequence of  well-adapted box
decompositions $\calb^{(n)}$ such that  
\begin{list}{}{\leftmargin=14pt}

	\item[1)] $\calb^{(0)}=\calb$, 

	\item[2)]  $\calb^{(n+1)}$ is inflated from  $\calb^{(n)}$ and 

	\item[3)]  $\calb^{(n+1)}$ defines a finite set  $\calp^{(n+1)}$ of
		$\calp^{(n)}$-patches (which contain at least a $\calp^{(n)}$-tile in
		their interiors) and a tiling in $\T(\calp^{(n+1)})$ of each leaf of
		$\X$. 

\end{list}
Since $\calp^{(n+1)}$-tiles are $\calp^{(n)}$-patches and also plaques of $\calb^{(n+1)}$, we shall use the same letter $\Pa$ to denote them. 
\medskip

Let $X^{(n)}$ be the decreasing sequence of total transversals associated to $\calb^{(n)}$.
Given any increasing sequence of integers $N_n$, we can construct a sequence of inflated box decompositions  $\calb^{(n)}$ that such each
Delone set $\cald^{(n)}_{\! _\calt} = L_\calt \cap X^{(n)}$ is $N_n$-separated, i.e. if $\calt_1 \neq \calt_2$ in $\cald^{(n)}_{\! _\calt}$, the distance between $\calt_1$ and $\calt_2$ is bigger or equal than $N_n$.
Such a sequence defines an increasing sequence of CEERs $\calr_n$ on $X$. Indeed, for
each $n\in\N$, the equivalence class $\calr_n[\calt]$ coincides with the \emph{discrete
plaque} $P_n = \Pa_n \cap X$ determined by the plaque 
$\Pa_n$ of $\calb^{(n)}$ passing through $\calt$. We can also see
each discrete plaque $P_n$ as a plaque of a \emph{discrete flow box}
$B = \B \cap X$ and each discrete flow box $B$ as an element
of a discrete box decomposition defined by $\calb$. 

\begin{proposition} 
	\label{minimal}
	The inductive limit $\calr_\infty= \varinjlim \calr_n$ is a minimal open
	AF equiva\-lence subrelation of  $\calr$.
\end{proposition}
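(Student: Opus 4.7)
The statement packages three assertions: the inclusion $\calr_\infty \subseteq \calr$, the open AF structure, and minimality. My plan is to dispatch the first two quickly and focus on the third.

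For the inclusion, any pair in $\calr_n$ has the form $(\calt, \calt - v)$ with $v$ a $\calp$-base point lying in the $\calp^{(n)}$-tile of $\calt$ through $0$, so $v \in D_\calt$ and the pair automatically lies in $\calr$. To see each $\calr_n$ is a CEER, I would note that on each discrete flow box $B \cong P_n \times X_i$ the relation takes the closed form $\{((p,y),(q,y)) : p, q \in P_n,\ y \in X_i\}$; since the discrete box decomposition is finite, $\calr_n - \Delta_X$ is a finite union of such sets, hence compact. The sequence $(\calr_n)$ is increasing because each $\calp^{(n+1)}$-tile is a union of $\calp^{(n)}$-tiles (by the inflation property), so the $\calp^{(n)}$-tile through $0$ is contained in the $\calp^{(n+1)}$-tile through $0$ and consequently $P_n \subseteq P_{n+1}$. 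Hence $\calr_\infty$ is AF by definition, and it is open in $\calr$ as a union of open CEERs.

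For minimality, the plan is to show that $\calr_\infty[\calt]$ is dense in $X$ for every $\calt \in X$. Given $\calt' \in X$ and $r > 0$, I would let $M_r$ denote the patch of $\calt'$ formed by the $\calp$-tiles meeting $B(0,r)$. Since $\calt' \in \X = \overline{L_\calt}$ and $\X$ is minimal, $M_r$ occurs in $\calt$ with bounded gaps: there exists $R > 0$ such that every ball of radius $R$ in $\calt$ contains a translated copy of $M_r$, centered at a $\calp$-base point. I would then pick $n$ large enough that the $\calp^{(n)}$-tile $T_n$ of $\calt$ containing $0$ contains an open ball of radius $R$, locate such a centre $v \in D_\calt \cap T_n$ inside it, and observe that $(\calt, \calt - v) \in \calr_n \subseteq \calr_\infty$ (because $v$ sits in the same $\calp^{(n)}$-tile as $0$) while $R(\calt', \calt - v) > r$ (because the patches match).

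The only real obstacle is the existence of such an $n$, i.e.\ that the inradii of the $\calp^{(n)}$-tiles grow without bound, uniformly in $\calt \in X$. This is not stated explicitly in Theorem~\ref{thinflac}, but I expect it to follow from property (iii) of that theorem --- each $\calp^{(n+1)}$-tile strictly contains a $\calp^{(n)}$-tile in its interior --- together with the freedom to choose the Delone separation constants $N_n \to \infty$. A short compactness argument on $X$ using the finite pattern condition should promote strict interior containment to uniform inradius growth, and I would record this as an auxiliary lemma preceding the minimality step.
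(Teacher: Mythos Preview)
Your proposal is correct and follows essentially the same route as the paper: the paper also dispatches the open AF subrelation claim in one line and proves minimality by noting that each $\calr_\infty$-class is an increasing union of discrete plaques $P_n$ which must eventually meet any given open set $A$, since $A \cap L_\calt$ is Delone by minimality of $\calr$. Your version is a more explicit, basic-neighborhood reformulation of the same argument, and the inradius-growth lemma you single out as the ``only real obstacle'' is precisely the point the paper leaves implicit when it writes ``therefore $A \cap \Pa_n \neq \emptyset$ for some $n$''.
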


\begin{proof} 
	It is clear that  $\calr_\infty$ is an open AF equivalence subrelation of
	$\calr$. On the other hand, in order to show that $\calr_\infty$ is minimal, we must prove that all 
	$\calr_\infty$-equivalence classes meet any open subset $A$ of $X$. But each 
	$\calr_\infty$-equivalence class contains an increasing sequence of discrete plaques  
	$P_ n = \Pa_n \cap X$ where $\Pa_n$ are the plaques of the box
	decompositions $\calb^{(n)}$. Since $\calr$ is minimal, the intersection $A \cap L_\calt$ remains a
	 Delone set quasi-isometric to $L_\calt$, and therefore 
	 $A \cap \Pa_n = A \cap P_ n \neq \emptyset$ for some
	 $n \in \N$. Then $A \cap \calr_\infty[\calt]$ is also nonempty, and thus $\calr_\infty[\calt]$ is dense. 
\end{proof} 

\begin{remark}
The inflation process developed in \cite{BBG} for tilings and tilable laminations has been extended in 
 \cite{ALM} and \cite{L} for transversely Cantor laminations.
\end{remark}

\subsection{Boundary} \label{sSboundary}

Let us start by defining the (discrete) boundary of an EER $\calr_n$:

\begin{definition}
	i) For any tiling $\calt \in X$, let $\partial \calr_n[\calt]$ be the set
	of tilings $\calt' = \calt - v$ such that $v$ is the base point of any
	$\calp$-tile of the patch $\Pa_n$ meeting $\partial \Pa_n$. 
	\medskip 

	\noindent
	ii)  We define the \emph{boundary of $\calr_n$} as the clopen set 
	\[
		\partial \calr_n = \bigcup_{\calt \in X^{(n)}} \partial\calr_n[\calt]
		                 = \bigcup_{\B_n \in \calb^{(n)}} \partial_v B_n
	\]
	where $\partial_v B_n$ is the vertical boundary of the discrete
	flow box $B_n = \B_n \cap X$.
	\medskip 

	\noindent
	iii) Finally, $\partial \calr_\infty = \bigcap_{n\in\N} \partial \calr_n$
	is a meager closed subset of $X$, which we call the \emph{boundary
	of $\calr_\infty$}. 
\end{definition} 
\noindent
Notice that, even if $\calr$ is AF, the boundary $\partial \calr_\infty$ is always nonempty.
\medskip 

For each tiling $\calt \in \partial \calr_\infty$, the $\calr$-class $\calr [\calt]$ separates into several $\calr_\infty$-equivalence classes. By replacing the elements of theses $\calr_\infty$-equivalence classes with the corresponding $\calp$-tiles, we obtain a decomposition of the leaf $L = L_\calt$ passing through $\calt$ into regions having a common boundary $\Gamma = \Gamma_\calt$. In fact, the union of all these common boundaries is a closed subset
 $$
\partial_c \calr_\infty = \bigcap_{n\in\N} \bigcup_{\B_n \in \calb^{(n)}} \partial_v \B_n
$$
of $\X$, which we call  the \emph{continuous boundary} of $\calr_\infty$.
For each $n \in \N$, the closed subset $\bigcup_{\B_n \in \calb^{(n)}} \partial_v \B_n$ of $\X$ intersects $L$ into an infinite graph 
$\Gamma^{(n)} = \Gamma^{(n)}_\calt$ where each edge separates two different tiles of $\calt$ and  is never terminal. 
According to property $(iii)$ in Theorem~\ref{thinflac}, $\Gamma  = \bigcap_{n\in\N} \Gamma^{(n)}$ is an infinite tree without terminal edges. It is clear that $\Gamma$ is acyclic. Morever, if we assume that $\Gamma$ is not connected, then there would be a sequence $\Pa_n$ of inflated $\calb_n$-plaques such that the distance between two disjoint edges would be bounded, but this is not possible. As the box decomposition 
$\calb^{(0)} = \calb$ is finite,  $\Gamma^{(0)}$ has \emph{bounded geometry}, i.e. each vertex has uniformly bounded degree. The same happens with $\Gamma$ and $\Gamma^{(n)}$ for all $n \in \N$. 
\medskip 

By replacing $X$ with a different total transversal $\check{X}$ passing through the vertices of $\calt$, we can assume that the corresponding discrete boundary $\cpartial \calr_\infty$ is equal to 
$\partial_c \calr_\infty \cap \check{X}$. However, each point in the new boundary $\cpartial \calr_\infty$ determines four points (in the interior of four different adjacent tiles) of the original boundary  $\partial \calr_\infty$. We resume the above discussion in the following statement:

\begin{proposition}
	\label{boundary}
	The continuous boundary $\partial_c \calr_\infty$ is a closed subset of $\X$
	which admits a natural partition into trees without terminal edges 
	(induced by the TDS of $\X$) with total tranversal $\cpartial \calr_\infty$.
	\end{proposition}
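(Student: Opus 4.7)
The plan is to read off the three assertions of the proposition from the discussion immediately preceding the statement, with the main structural input being property $(iii)$ of Theorem~\ref{thinflac}. Closedness is immediate: $\partial_c \calr_\infty$ is the intersection over $n \in \N$ of the finite unions $\bigcup_{\B_n \in \calb^{(n)}} \partial_v \B_n$ of vertical boundaries of flow boxes, and each of these is closed in $\X$.

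For the partition, I would index it by the leaves of the lamination $\calf$: for $\calt \in \X$ set $\Gamma^{(n)}_\calt = L_\calt \cap \bigcup_{\B_n \in \calb^{(n)}} \partial_v \B_n$ and $\Gamma_\calt = L_\calt \cap \partial_c \calr_\infty$. The set $\Gamma^{(n)}_\calt$ is the subgraph of $\calt$ consisting of those edges that separate two adjacent $\calp$-tiles lying in distinct plaques of $\calb^{(n)}$, and property $(ii)$ of Theorem~\ref{thinflac} makes the sequence $\Gamma^{(n)}_\calt$ decreasing, so $\Gamma_\calt = \bigcap_n \Gamma^{(n)}_\calt$. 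Each $\Gamma^{(n)}_\calt$ has no terminal edges because every endpoint of an edge on a vertical boundary is shared with at least one further boundary edge; bounded geometry follows at once from finiteness of $\calp$.

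The heart of the proof is acyclicity and absence of terminal edges for the limit graph $\Gamma_\calt$. Assume by contradiction that $\Gamma_\calt$ contains a simple cycle $\gamma$; then $\gamma$ bounds a bounded region $U \subset L_\calt$ covered by finitely many $\calp$-tiles. Because the Delone sets $D^{(n)}_{\!_\calt}$ are $N_n$-separated with $N_n \to \infty$, the diameters of the plaques of $\calb^{(n)}$ tend to infinity; using property $(iii)$ of Theorem~\ref{thinflac} iteratively, for $n$ large enough one finds a plaque of $\calb^{(n)}$ whose interior contains $\overline U$. But then every edge of $\gamma$ is interior to this plaque and does not belong to any vertical boundary of $\calb^{(n)}$, contradicting $\gamma \subset \Gamma^{(n)}_\calt$. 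A parallel argument rules out terminal edges: a terminal vertex $v$ of $\Gamma_\calt$ would eventually lie in the interior of a plaque of $\calb^{(n)}$, contradicting the fact that its unique incident edge lies on a vertical boundary.

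For the transversal assertion I would choose $\check X$ to consist of a single interior point of each $\calp$-edge orbit (for instance the midpoints). Then $\check X$ meets each edge of every $\Gamma_\calt$ in exactly one point, so $\cpartial \calr_\infty = \partial_c \calr_\infty \cap \check X$ is a total transversal for the leafwise partition $\{\Gamma_\calt\}_{\calt \in \X}$. The main obstacle is the acyclicity step, where property $(iii)$ of Theorem~\ref{thinflac} is the crucial structural input: it guarantees that any bounded subgraph of the continuous boundary is eventually absorbed into the interior of a plaque, preventing cycles and terminal edges from persisting in the intersection.
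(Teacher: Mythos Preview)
Your approach coincides with the paper's: the proposition is presented there without a separate proof, as a summary of the discussion immediately preceding it, and you have essentially expanded that discussion. Closedness, the leafwise partition, the use of property $(ii)$ for the nesting $\Gamma^{(n+1)}_\calt \subset \Gamma^{(n)}_\calt$, and the appeal to property $(iii)$ of Theorem~\ref{thinflac} for acyclicity all match.

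There is, however, a genuine slip in your argument for the absence of terminal edges. You write that ``a terminal vertex $v$ of $\Gamma_\calt$ would eventually lie in the interior of a plaque of $\calb^{(n)}$''. This cannot happen: since $v \in \Gamma_\calt \subset \Gamma^{(n)}_\calt$ for every $n$, the point $v$ lies on the vertical boundary of $\calb^{(n)}$ for every $n$ and is therefore never interior to any plaque. The correct argument is local and combinatorial rather than parallel to the acyclicity step. At the vertex $v$ there are only finitely many $\calp$-edges (bounded geometry of $\Gamma^{(0)}$), and the sets $E_n(v)$ of edges of $\Gamma^{(n)}_\calt$ incident to $v$ form a decreasing sequence of finite sets, each of cardinality at least $2$ because $\Gamma^{(n)}_\calt$ has no terminal edges. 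A decreasing sequence of subsets of a finite set is eventually constant, so $\bigcap_n E_n(v)$ has at least two elements; hence $v$ has degree $\geq 2$ in $\Gamma_\calt$. This is the step where bounded geometry is actually used, whereas your acyclicity argument (correctly) uses the unbounded growth of plaques coming from the $N_n$-separation together with property $(iii)$.
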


By construction, if $\calt \in \cpartial \calr_\infty$,  the degree of the origin in $\Gamma^{(n)} = \Gamma^{(n)}_\calt$
depends continuously on $\calt$. Thus, we obtain a continuous map $D_n : \cpartial \calr_\infty \to \N$, which is actually defined on the whole transversal $\check{X}^{(n)}$ consisting of all vertices of the $\calp^{(n)}$-tiling inflated from $\calt$. This extends to a continuous map on $\check{X}$ by defining $D_n(\calt) = 2$ when the origin belongs to some edge of the inflated tiling (so that it is a vertex of $\calt$ different from the vertices of the inflated tiling) and $D_n(\calt) = 0$ otherwise (that is, when the origin belongs to the interior of some inflated tile). We denote by $D : \check{X} \to \N$ the infimum of this family of continuous functions, called {\em the degree function of the inflation process}, which verifies the following property: 

\begin{proposition} \label{semi-continuous}
The degree function $D : \check{X} \to \N$ and its restriction $D : \cpartial \calr_\infty \to \N$ to the boundary $\cpartial \calr_\infty$ are upper semi-continuous. \qed 
\end{proposition} 

In order to apply the Absortion Theorem of \cite{GMPS2} in the next step, we 
also need to prove the following result: 

\begin{proposition}
	\label{thin}
	The boundary $\partial \calr_\infty$ is $\calr_\infty$-thin. 
\end{proposition}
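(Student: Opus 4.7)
The plan is to combine $\calr_n$-invariance of $\mu$ with a geometric (isoperimetric) estimate on the inflated patches. Property $(ii)$ of Theorem~\ref{thinflac} gives $\partial \calr_{n+1} \subseteq \partial \calr_n$, so the clopen sets $\partial \calr_n$ form a decreasing sequence whose intersection is $\partial \calr_\infty$ by definition. Since $\mu$ is a probability measure, continuity of measure reduces the statement to showing that $\mu(\partial \calr_n) \to 0$ as $n \to \infty$. The inclusion $\calr_n \subseteq \calr_\infty$ ensures that any $\calr_\infty$-invariant measure is automatically $\calr_n$-invariant at every level.

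Next, I would express $\mu(\partial \calr_n)$ using the box decomposition $\calb^{(n)}$. For each box type, let $k_n^i$ denote the total number of $\calp$-tiles in the patch $\Pa_n^i$ and $b_n^i$ the number of those tiles meeting $\partial \Pa_n^i$. Within each discrete flow box $B_n^i = \B_n^i \cap X$, $\calr_n$-invariance forces $\mu$ to distribute equally across the $k_n^i$ base points of a patch of type $i$, so
\[
\mu(\partial \calr_n \cap B_n^i) = \frac{b_n^i}{k_n^i}\,\mu(B_n^i).
\]
Summing over all box types and using $\sum_i \mu(B_n^i) = 1$ yields
\[
\mu(\partial \calr_n) \leq \max_i \frac{b_n^i}{k_n^i}.
\]

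The main obstacle is the geometric estimate $\max_i b_n^i / k_n^i \to 0$ as $n \to \infty$. This is precisely where the \emph{Robinson inflation} to be introduced in Section~\ref{buenainflacion} plays its role: its construction is designed so that the inradius of each level-$n$ patch $\Pa_n^i$ grows without bound uniformly in $i$, while the finite pattern condition bounds the sizes of the $\calp$-tiles from above and below. A standard isoperimetric comparison in $\R^2$ — the count $b_n^i$ is controlled from above by the perimeter of $\Pa_n^i$, while $k_n^i$ is controlled from below by its area — then gives the required uniform decay. Feeding this back into the previous inequality gives $\mu(\partial \calr_\infty) = \lim_n \mu(\partial \calr_n) = 0$, as required. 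I expect the verification that the Robinson inflation really does produce patches of uniformly growing inradius (rather than merely of growing diameter, which would not suffice) to be the delicate point, and the reason the special construction of Section~\ref{buenainflacion} is needed rather than an arbitrary inflation as in Theorem~\ref{thinflac}.
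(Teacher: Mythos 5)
Your proposal is correct and follows essentially the same route as the paper: reduce to $\mu(\partial\calr_n)\to 0$ via the decreasing clopen sets, use $\calr_n$-invariance to equidistribute $\mu$ over each discrete plaque and bound $\mu(\partial\calr_n)$ by the worst ratio of boundary tiles to total tiles, then invoke the isoperimetric decay built into the Robinson inflation (Lemma~\ref{isoperimetric1} and Proposition~\ref{isoperimetric2}, where the patch contains a square of side $N_n$ while its perimeter is $O(N_n)$, and $N_{n+1}\geq N_n^3$). The ``delicate point'' you flag is exactly what Proposition~\ref{isoperimetric2} supplies.
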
 

\noindent
In \cite{AGL}, we showed how to derive this result from the type of growth
of the leaves of $\calf$. To do so, we simply adapted the method used by
C.~Series in \cite{Ser} to prove that any measurable foliation with polynomial growth is hyperfinite.
But as in the original proof of the Affability Theorem in \cite{GMPS1}, Proposition~\ref{thin}
will be recovered here using the isoperimetric properties of the pieces
involved in the Robinson inflation. 

\subsection{Absorption}
\label{absorcion}

We give now a first description of the last step of the proof of the Affability Theorem. We 
start by recalling the  Absorption Theorem~4.6 of \cite{GMPS2}, which is a key
ingredient in this proof:

\begin{theorem}[\cite{GMPS2}]
	\label{thabsorption} 
	Let $\calr$ be a minimal AF equivalence relation on the Cantor set $X$. Assume 
	$Y$ is a $\calr$-\'etale and $\calr$-thin closed subset of $X$, and let $\calk$ be a CEER on $Y$ that is transverse to $\calr |_Y$ (i.e. 
	$\calr |_Y \cap \calk= \Delta_Y$ and there is an isomorphism of topological groupoids
	$\varphi :  \calr |_Y \ast \calk \to \calk \ast \calr |_Y$). Then there is a
	homeomorphism $h : X \to X$ such that
	\medskip 
 
	\noindent
	i) $h$ implements an orbit equivalence between the equivalence relation
	$\calr \vee \calk$ generated by $\calr$ and $\calk$, and the AF equivalence relation
	$\calr$;
	\medskip 

	\noindent
	ii) $h(Y)$ is $\calr$-\'etale and $h(Y)$ is $\calr$-thin;
	\medskip 

	\noindent
	iii) $h |_Y$ implements an isomorphism between  $\calr |_Y \vee \calk$ and
	$\calr |_{h(Y)}$.
	\medskip 

	\noindent
	In particular,  $\calr \vee \calk$ is affable. \qed
\end{theorem}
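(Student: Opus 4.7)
The plan is to realise $\calr$ on a Bratteli diagram and then absorb $\calk$ into the tail equivalence relation by a countable sequence of carefully localised ``tower swaps'', using the $\calr$-thinness of $Y$ to guarantee convergence. First, by the structure theorem for AF equivalence relations recalled in Section~\ref{afables}, I would identify $X$ with the path space $X_{(V,E)}$ of a simple standard Bratteli diagram $(V,E)$ and $\calr$ with the tail equivalence relation. This gives a canonical filtration $\calr = \bigcup_n \calr_n$, where $\calr_n$ identifies paths agreeing from level $n$ on; each $\calr_n$-class is a Kakutani-Rokhlin tower over a vertex $v \in V_n$, and simplicity of $(V,E)$ ensures that any two towers at a sufficiently deep level can be matched through cofinal paths.

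Second, I would exploit the $\calr$-thinness of $Y$ to build a shrinking family of clopen neighbourhoods around it. Since $Y$ is closed with $\mu(Y)=0$ for every $\calr$-invariant probability measure $\mu$, and the set of such measures is a weak-$\ast$ compact Choquet simplex, a standard compactness argument gives, for each $\varepsilon>0$ and each $n$, a clopen $\calr_n$-saturated neighbourhood $U$ of $Y$ whose measure is less than $\varepsilon$ uniformly in $\mu$. After telescoping $(V,E)$ I may assume that at level $n$ there is a clopen neighbourhood $U_n \supset Y$, a union of complete $\calr_n$-towers, with $\sup_\mu \mu(U_n) \to 0$ and $\bigcap_n U_n = Y$.

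Third, I would invoke the transversality datum. Compactness of $\calk - \Delta_Y$ together with the groupoid isomorphism $\varphi : \calr|_Y \ast \calk \to \calk \ast \calr|_Y$ extends each pair $(y,y') \in \calk$ to a local homeomorphism between clopen neighbourhoods in $X$ that commutes with $\calr$ to first order. Choosing $n$ large enough, these matchings can be realised as bijections between pairs of distinct $\calr_n$-towers sitting inside $U_n$; equivalently, $\calk$-related points of $Y$ occupy distinct but canonically paired $\calr_n$-towers.

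Fourth, I would construct $h$ as the limit of a sequence of homeomorphisms $h_n : X \to X$. At stage $n$, modify $h_{n-1}$ on a clopen subset of $U_n$ by a ``tower swap'' that permutes $\calr_n$-towers according to the $\calk$-matchings, so that the images of $\calk$-related points fall into a common $\calr_n$-tower. Because each modification is supported in $U_n$ and $\sup_\mu \mu(U_n)\to 0$, a Borel-Cantelli type estimate forces uniform convergence to a homeomorphism $h$; properties (i)--(iii) are then read off from what $h$ does on $Y$ and its complement. The main obstacle is step four: simultaneously keeping the tower swaps globally consistent with the Bratteli structure (so that the pushforward of $\calr \vee \calk$ remains literally the tail relation, not merely orbit equivalent to it at each finite stage) and preventing later modifications from undoing earlier matchings. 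This is precisely where the ``absorption'' machinery of \cite{GMPS2} is indispensable, since its local surgery lemmas provide the exact control on how to combine an infinite sequence of such swaps into a single homeomorphism of the Cantor set.
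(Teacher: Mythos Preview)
The paper does not prove Theorem~\ref{thabsorption}. It is quoted verbatim from \cite{GMPS2} and marked with a terminal \qed, meaning the authors are importing the result as a black box and providing no argument of their own. There is therefore nothing in this paper against which to compare your attempt; the Absorption Theorem functions here purely as an external input to the proof of Theorem~\ref{affabilitytheorem} in Section~\ref{filandabs}.

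As for the content of your sketch: the first three steps are a fair caricature of the strategy actually used in \cite{GMPS2} (Bratteli model, thin neighbourhoods via uniform measure estimates, realising $\calk$ by tower matchings). But your fourth step is not a proof --- you explicitly concede that keeping the infinite sequence of swaps coherent ``is precisely where the `absorption' machinery of \cite{GMPS2} is indispensable''. That is circular: you are invoking the theorem you set out to prove. The genuine technical content of \cite{GMPS2} lies exactly in the local surgery lemmas that make step four work, and your proposal does not supply them. If you want an honest proof you must either reproduce those lemmas or find an alternative mechanism for gluing the finite-stage homeomorphisms into a single limit that simultaneously respects the tail relation and absorbs $\calk$.
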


According to Propositions~\ref{minimal},~\ref{boundary}~and~\ref{thin}, the equivalence
relation $\calr_\infty$ fulfills most of the hypotheses of the Absorption Theorem. On the other
hand, since $\calr_\infty$ and $\calr$ are EER on $X$, their graphs (denoted again by $\calr$ and $\calr_\infty$) split into countably many 
clopen bisections of $\calr$ and $\calr_\infty$ respectively. Recall that a {\em bisection} of $\calr_\infty$ (or $\calr$) is the graph of a partial transformation $\varphi : A \to B$ of $\calr_\infty$ (or $\calr$) between two subsets $A$ and $B$ of $X$. If all $\calr$-equivalence classes split into at most two $\calr_\infty$-equivalence classes, we have a global decomposition of $\partial \calr_\infty$ into two clopen subsets $A$ and $B$ and a global transformation $\varphi :  A \to B$ of $\calr |_{\partial \calr_\infty}$ which generates a CEER  $\calk$ transverse to $\calr_\infty |_{\partial \calr_\infty}$. 
In this case,  we can use a former version of the Absorption Theorem (Theorem~4.18  of \cite{GPS}) to see that $\calr = \calr _\infty \vee \calk$ is OE to $\calr _\infty$. But, in general, there may be a partial transformation $\varphi : A \to B$ of $\calr_\infty$ sending a point $\calt_1 \in \partial  \calr_\infty$ to a point $\calt_2 = \varphi(\calt_1) \in \partial  \calr_\infty$ (so that its graph is a clopen bisection of $\calr_\infty$ passing through $(\calt_1,\calt_2) \in \calr_\infty |_{\partial  \calr_\infty}$) and admiting another point $\hat{\calt}_1 \in A \cap \partial  \calr_\infty$ such that $\hat{\calt}_2 = \varphi(\hat{\calt}_1) \notin \partial  \calr_\infty$ (and hence $(\hat{\calt}_1,\hat{\calt}_2) \notin \calr_\infty |_{\partial  \calr_\infty}$). In the other words, the boundary $\partial \calr_\infty$ may not be $\calr_\infty$-\'etale. In order to divide
the boundary $\partial \calr_\infty$ into smaller closed pieces with the same finite number of
$\calr_\infty$-equivalence classes, we can use some special inflation
process, like that described in \cite{GMPS1} using Voronoi tilings, or the one we describe below inspired by  Robinson tilings.

\section{Robinson inflation} 
\label{buenainflacion}

In this section, we define an inflation process for planar tilings which is modeled by
the natural inflation of Robinson tilings (see \cite{GS} and \cite{Ro}). Firstly, the tiling space is replaced with one whose tiles are marked
squares using \cite{SW}. Then we construct a family of inflated flow boxes whose plaques are squares
of side $N_1$, that is maximal in the sense that there is no space for any other square of side $N_1$. Finally, we replace this partial box decomposition with a complete box decomposition in such a way that the isoperimetric ratios of the plaques
are still good. In this way, we recurrently obtain an inflation process such
that all $\calr$-equivalence classes split at most into $4$ different
$\calr_\infty$-equivalence classes. However, as in the general case, the EER $\calr_\infty$ is still too coarse to make \'etale its boundary $\partial \calr_\infty$. Thus, in Section~\ref{filtrating}, we shall decorate Robinson plaques to obtain a minimal open AF equivalence subrelation 
$\hat{\calr}_\infty$ such that $\partial \calr_\infty$ becomes a $\hat{\calr}_\infty$-thin and $\hat{\calr}_\infty$-\'etale closed subset of $X$. 
Using similar absorption techniques to those used in \cite{GMPS1}, we shall reduce the number of $\hat{\calr}_\infty$-equivalence classes in which $\calr$-equivalence classes are decomposed until $\calr$ is proven to be affable.

\subsection{Sadun-Williams reduction to square tilings} 

Let $\T(\calp)$ be the foliated space of all planar tilings $\calt$
constructed  from a finite set of prototiles $\mathcal P$. Let us recall that tiles
are obtained by translation from a finite number of polygons, which are touching face-to-face.
For any repetitive tiling $\calt \in \T(\calp)$, the continuous hull of
$\calt$ is the minimal closed subset $\X = \overline{L}_\calt$. 
The induced TDS is transversely modeled by the set $X$ of elements of $\X$ where the origin belongs to $\mathcal{D}_\calt$. If $\calt$ is also aperiodic, then $X$ is homeomorphic to the Cantor set.

\begin{theorem}[\cite{SW}]
	The continuous hull of any planar tiling is OE to the
	continuous hull of a tiling whose tiles are marked squares. \qed
\end{theorem}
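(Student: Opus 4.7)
The plan is to realise the Sadun--Williams theorem via the notion of \emph{mutual local derivability} (MLD): I would show that any finite local complexity planar tiling $\calt$ is MLD equivalent to a tiling $\calt^\square$ whose tiles are marked squares of a single size. MLD equivalence produces a canonical homeomorphism between the corresponding continuous hulls which conjugates the $\R^2$ actions, and hence induces an isomorphism of the associated \'etale equivalence relations on the total transversals; this is in particular an orbit equivalence.

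First, I would fix a base point $v$ in each prototile of $\calp$, so that to every $\calt \in \T(\calp)$ is associated a uniformly discrete Delone set $D_\calt$. Choose $\rho > 0$ small enough that, for every $\calt$ in the hull, the open squares $v + (-\rho,\rho)^2$ with $v \in D_\calt$ are pairwise disjoint and each is contained in the interior of its tile. To every such small square I would attach a label recording the pattern of $\calt$ in the ball $B(v,R)$ for a fixed but sufficiently large radius $R$; the finite pattern condition ensures only finitely many labels occur. Next, partition the complement $\R^2 \setminus \bigsqcup_v (v + (-\rho,\rho)^2)$ into further $(2\rho)\times(2\rho)$ marked squares, each carrying the label of the pattern in which it sits, in such a way that the partition is locally determined by the original tiling. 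By finite local complexity this produces a finite prototile set $\calp'$ of marked squares together with a tiling $\calt^\square \in \T(\calp')$.

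The main obstacle, and the technical heart of the argument in \cite{SW}, is precisely this gap-filling step: to carry out the partition of the interstitial region in a translation-equivariant and purely local way, one must choose a common cellular refinement of the prototiles and the scales $\rho$ and $R$ carefully so that the square grid chosen over neighbouring patches of $\calt$ is consistent. Once this is accomplished, the assignment $\calt \mapsto \calt^\square$ and its inverse extend continuously and $\R^2$-equivariantly to the entire hull, and the induced map between total transversals implements the desired orbit equivalence between $\calr$ and its square-tiling counterpart.
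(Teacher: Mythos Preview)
The paper gives no proof of this theorem: the \qed\ immediately following the statement signals that it is quoted from \cite{SW} and used as a black box. There is therefore no proof in the paper to compare yours against.

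Your sketch, however, contains a genuine gap. You frame the argument as establishing an MLD equivalence, but a general finite-local-complexity tiling is \emph{not} MLD to a marked-square tiling, and your gap-filling step cannot be carried out as described. The obstruction is arithmetic: if the prototiles have incommensurable edge vectors --- say $\calp$ contains both a unit square and a $\sqrt{2}\times\sqrt{2}$ square, or more generally polygons whose edge vectors do not span a lattice --- then no choice of $\rho$ allows the complement of your small squares around the Delone points to be tiled by further $2\rho\times 2\rho$ squares in a translation-equivariant, locally determined way. ``Choosing $\rho$ and $R$ carefully so that the square grid over neighbouring patches is consistent'' simply has no solution in that situation, so the map $\calt \mapsto \calt^\square$ you want does not exist as a local rule.

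What Sadun and Williams actually do is different in kind: they first \emph{deform} the prototile shapes so that all return vectors lie in a common lattice (finite local complexity reduces this to adjusting finitely many vectors), and only afterwards subdivide into marked unit squares. The deformation is not a local derivation --- it genuinely changes the geometry and the two tilings are typically not MLD --- but it does yield a homeomorphism of hulls carrying $\R^2$-orbits to $\R^2$-orbits, which is precisely the orbit equivalence the present paper invokes.
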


According to this theorem, we assume $\calp$ is a finite set of marked
squares and we consider an aperiodic and repetitive tiling $\calt$ in
$\X$. Furthermore, all leaves $\X$  are
endowed with the $\max$-distance.

\subsection{Constructing an intermediate tiling} 
Now, let us start by inflating some tiles in the usual sense: there is a finite number of  of flow boxes $\B_{1,i} \cong \Pa_{1,i} \times X_{1,i}$, $i = 1, \dots, k_1$,  such that the plaques $\Pa_{1,i}$ are squares of  side $N_1$ and the transversal
$X_1 =  \bigcup_{i=1}^{k_1} X_{1,i}$ is $N_1$-dense in $\X$ (i.e. any ball of radius $N_1$ meets $X_1$) with respect to the longitudinal $\max$-distance. 
We say that $\calb_1 = \{ \B_{1,i} \}_{i=1}^{k_1}$ is a \emph{partial box decomposition} of $\X$, and 
we  write $\calp_1 = \{ \Pa_{1,i} \}_{i=1}^{k_1}$.  The first step in order to inflate tilings in $\X$ is to replace $\calb_1$ with a true box decomposition $\pb_1$. To do so, we  need some preliminaries: 

\begin{figure}
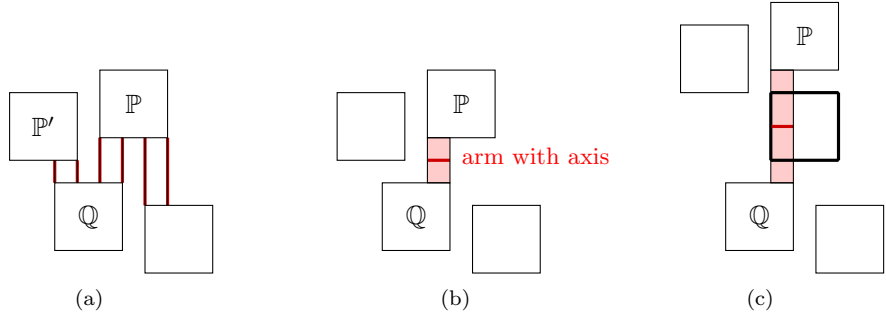

\subfigure[]{
\begin{mosaicoPBC}[scale=.3]
\Va{4}\Pl{3}\Nl
\Pl{3} \Nl{2}
\Va{4}\Br|{0}{2}\Va{1}\Br|{0}{2}\Va{1}\Br|{0}{3}\Va{1}\Br|{0}{3}\Nl{1}
\Va{2}\Br|{0}{1}\Va{1}\Br|{0}{1}\Nl
\Va{2}\Pl{3}\Nl
\Va{6}\Pl{3}Nl
\node at (5.5,1.5){$\Pa$};
\node at (1.5,2.5){$\Pa'$};
\node at (3.5,6.5){$\Q$};
\end{mosaicoPBC}
\label{arm1}
}
\hspace{2em}
\subfigure[]{
\begin{mosaicoPBC}[scale=.3]
\Va{4}\Pl{3}\Nl
\Pl{3} \Nl{2}
\Va{4}\Br-{1}{2}\Nl{2}
\Va{2}\Pl{3}\Nl
\Va{6}\Pl{3}Nl
\node at (5.5,1.5){$\Pa$};
\node at (3.5,6.5){$\Q$};
\node at (8.8,3.8) [red] {$\mbox{\small arm with axis}$};
\end{mosaicoPBC}
\label{arm2}
}
\subfigure[]{
\begin{mosaicoPBC}[scale=.3]
\Va{4}\Pl{3}\Nl
\Pl{3}\Nl{2}
\Va{4}\Br-{1}{5}\Nl{1}
\Va{4}{\red \Pl{3}}\Nl{4}
\Va{2}\Pl{3}\Nl
\Va{6}\Pl{3}\Nl
\draw[very thick] (4,4) -- (7,4);
\draw[very thick] (7,4) -- (7,7);
\draw[very thick] (7,7) -- (4,7);
\draw[very thick] (4,7) -- (4,4);
\node at (5.5,1.5){$\Pa$};
\node at (3.5,9.5){$\Q$};
\end{mosaicoPBC}
\label{arm3}
}
\caption{Arms and axes}
\label{arm}
\end{figure}

\begin{definition}
	Let $\Pa$ and $\Q$ be two plaques of $\calb_1$ contained in the same
	leaf of  $\X$. We say that $\Q$ is a \emph{neighbor of $\Pa$} if the
	orthogonal distance from an edge of $\Pa$ to $\Q$ is the minimum of the distance
	from this edge of $\Pa$ to another plaque, see Figure~\ref{arm1}.  In this case, the union of the 
	orthogonal segments which realize the distance between $\Pa$ and $\Q$ is called an 
	\emph{arm} (of the complement of the partial tiling defined by $\calb_1$), see 
	Figure~\ref{arm2}. Such an arm contains a segment, called the \emph{axis}, which is 
	parallel and equidistant to the corresponding edges of $\Pa$ and $\Q$. Also note that 
	$\Pa$ is not always a neighbor of  $\Q$, even though $\Q$ is a neighbor of $\Pa$, 
	see Figure~\ref{arm1}.
\end{definition} 

\begin{lemma} 
	Any arm has a length and width less than or equal to $N_1$.
\end{lemma}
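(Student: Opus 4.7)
The plan is to establish the two bounds by elementary geometric arguments. The length bound is essentially automatic from the definition of the axis, while the width bound is a short contradiction argument combining the $N_1$-density of $C_1$ with the definition of a neighbor.

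For the length, let $e \subset \partial \Pa$ and $e' \subset \partial \Q$ be the parallel edges whose orthogonal distance realizes the neighbor relation between $\Pa$ and $\Q$, and let $\ell$ denote the line parallel to and equidistant from $e$ and $e'$. By definition, the axis is the subsegment of $\ell$ consisting of points whose orthogonal segment reaches both $e$ and $e'$, so it is contained in the intersection of the orthogonal projections of $e$ and $e'$ onto $\ell$. Since $\Pa$ and $\Q$ are squares of side $N_1$, both of these projections are segments of length $N_1$, and hence the axis has length at most $N_1$.

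For the width, let $W$ denote the orthogonal distance from $e$ to $e'$. Suppose towards a contradiction that $W > N_1$, and let $m$ be the midpoint of the axis. By the $N_1$-density of $C_1$ with respect to the longitudinal $\max$-distance, there exists a base point $y \in C_1$ with $d_{\max}(m,y) \leq N_1$, sitting in some plaque $\Pa''$ of $\calb_1$ whose interior is disjoint from the arm. If $\Pa'' \in \{\Pa, \Q\}$, a direct computation using that $y$ lies in the square $\Pa''$ of side $N_1$ and that the orthogonal distance from $m$ to the edge of $\Pa''$ facing the arm is exactly $W/2$ forces $W \leq N_1$, contradicting the hypothesis. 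If instead $\Pa''$ is neither $\Pa$ nor $\Q$, then the combined constraints that $\Pa''$ is a square of side $N_1$, lies within $\max$-distance $N_1$ of $m$, and avoids the interior of the arm push it into the strip bounded by the supporting lines of $e$ and $e'$; one then exhibits an orthogonal segment from a suitable point of $e$ to $\Pa''$ of length strictly less than $W$, contradicting the hypothesis that $\Q$ is a neighbor of $\Pa$.

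The main obstacle I foresee is the second subcase: one must rule out configurations where $\Pa''$ sits above or below the strip between $e$ and $e'$ without vertically overlapping $e$. A careful case analysis in the $\max$-metric, based on the side length $N_1$ of $\Pa''$ and the position of $y$ inside it, should produce an explicit orthogonal segment from $e$ to $\Pa''$ of length strictly less than $W$. Once this is settled, the length and width bounds together imply that every arm is contained in an $N_1 \times N_1$ square, as required.
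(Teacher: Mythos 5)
Your length bound is fine and agrees with the paper's one\-/line observation (the orthogonal segments issue from a single edge of $\Pa$, which has length $N_1$). The width bound, however, contains a genuine gap, and the local route you choose cannot be completed. The paper's argument is global: by the very definition of a neighbor, no plaque of $\calb_1$ can meet the rectangle swept out orthogonally from the \emph{whole} edge $e$ up to distance $W$, since any plaque point in that region would be orthogonally closer to $e$ than $\Q$ is. This plaque-free rectangle has dimensions $N_1\times W$, so if $W>N_1$ it contains a square of side $N_1$, contradicting the maximality of the partial box decomposition $\calb_1$ (equivalently, the $N_1$-density of $C_1$); this is exactly what Figure~\ref{arm3} depicts. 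The contradiction comes from the emptiness of the full orthogonal shadow of $e$, not from the position of any particular nearby plaque.

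Your argument instead locates a base point $y$ within $\max$-distance $N_1$ of the midpoint $m$ and tries to contradict the position of its plaque $\Pa''$. This fails on both subcases. In the subcase $\Pa''\in\{\Pa,\Q\}$, the inequality $d_{\max}(m,y)\leq N_1$ only yields $W/2\leq N_1$, i.e.\ $W\leq 2N_1$, unless you additionally pin down where the base point sits inside its $N_1\times N_1$ plaque; the claimed conclusion $W\leq N_1$ does not follow. In the second subcase, the obstacle you flag is not merely technical: a plaque $\Pa''$ may lie within $\max$-distance $N_1$ of $m$ while sitting entirely outside the orthogonal shadow of $e$ (laterally offset from the arm, inside the strip between the supporting lines of $e$ and $e'$). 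Such a plaque admits no orthogonal segment from $e$ at all, so it contradicts neither the neighbor relation nor the density of $C_1$, and no contradiction can be extracted from its position. You should replace the midpoint/nearby-plaque analysis by the empty-rectangle argument above.
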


\begin{proof}
	Obviously the length of any arm is bounded by the side length $N_1$ of
	the plaques of $\calb_1$. On the other hand, if the width of an arm (between
	two plaques $\Pa$ and $\Q$) were larger than $N_1$, there would be space for
	another plaque (between $\Pa$ and $\Q$) and $X_1$ would not be $N_1$-dense, 
	see Figure~\ref{arm3}.
\end{proof} 

\begin{lemma}
	For each tiling $\calt \in \X$, let us consider the union of the plaques of the partial box 
	decomposition $\calb_1$ and the corresponding arms. Each connected component 
	of the complement of this union is a marked rectangle, called a \emph{cross}, 
	with sides of a length less than or equal to $2N_1$. 
\end{lemma}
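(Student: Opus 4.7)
The plan is to exploit the rigid axis-aligned geometry produced by the Sadun–Williams reduction together with the definition of an arm. Each plaque $\Pa_{1,i}$ is an axis-aligned square of side $N_1$, and each arm, being a union of orthogonal segments between two parallel plaque edges, is an axis-aligned rectangular strip. Consequently, in every leaf, the boundary of $U=\bigcup_i \Pa_{1,i}\cup\bigcup(\text{arms})$ consists entirely of horizontal and vertical segments, so every connected component $R$ of its complement is an open orthogonal polygon. The $N_1$-density of $C_1$ in the longitudinal $\max$-distance already forces the diameter of $R$ to be bounded in terms of $N_1$.

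To show $R$ is a rectangle, I would show that $\partial R$ has no reflex (inward-pointing) corner, so that its only corners are its four convex right angles. A reflex corner of $R$ would correspond to a convex corner of $\partial U$ lying on $\partial R$, and such convex corners can occur only at a plaque corner not absorbed by an adjoining arm, or at an outer corner of an arm where it meets a plaque. For each such potential corner, I would invoke the minimality built into the definition of neighbor to locate another plaque or arm incident there, closing off the possible wrap-around. For instance, at the top-right corner of a plaque $\Pa$ not absorbed into its right arm, the arm emanating from the top edge of $\Pa$ must itself reach this corner—otherwise a plaque strictly closer than $\Pa$'s right neighbor would be available, violating the minimality. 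A parallel argument handles outer arm corners. Thus $\partial R$ has only convex corners, which forces $R$ to be a rectangle.

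For the side-length bound, each side of $R$ is a single maximal horizontal or vertical segment of $\partial U$, which is the concatenation of at most two adjacent pieces: a portion of a plaque edge (of length $\le N_1$) and a portion of an adjoining arm edge (of length $\le N_1$ by the previous lemma). A third collinear piece would necessitate an intermediate corner introducing a perpendicular edge that would cut the side in two. Hence each side of $R$ has length at most $2N_1$.

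The principal obstacle will be the case analysis ruling out reflex corners, especially in degenerate configurations such as plaques sharing a collinear edge, arms of vanishing length or width, or multiple equally close neighbors along a single edge. In each case the strategy is the same: use the minimality built into the neighbor relation to produce an obstruction that prevents the complement from turning inward, leaving only convex right angles in $\partial R$.
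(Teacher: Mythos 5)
Your overall skeleton --- each component is a bounded orthogonal polygon, a reflex--corner--free orthogonal polygon is a rectangle, and each side decomposes into at most two collinear pieces of length $\leq N_1$ --- is reasonable, and is in fact more explicit than the paper's own argument, which simply counts that each component is surrounded alternately by $3$ or $4$ plaques and $3$ or $4$ arms and reads the conclusion off Figures~\ref{nondeg4} and~\ref{nondeg3}. However, the one concrete mechanism you offer for the load-bearing step (no reflex corners) is incorrect. It is not true that if the top-right corner of $\Pa$ is missed by the arm on the right edge of $\Pa$, then the arm on the top edge of $\Pa$ must reach it. Take $N_1=10$, $\Pa=[0,10]^2$, right neighbor $\Q=[12,22]\times[-5,5]$ (outgoing arm $[10,12]\times[0,5]$), top neighbor $\Q'=[-5,5]\times[13,23]$ (outgoing arm $[0,5]\times[10,13]$), and a further plaque $\Q''=[13,23]\times[8,18]$. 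Neither of $\Pa$'s own arms comes anywhere near the corner $(10,10)$, and no minimality is violated: $\Q''$ is at distance $3>2$ from the right edge of $\Pa$, so it is simply not a neighbor of $\Pa$. What protects the corner is the \emph{incoming} arm $[10,13]\times[8,10]$ from $\Q''$ to $\Pa$ --- $\Pa$ \emph{is} the left neighbor of $\Q''$ even though $\Q''$ is not a neighbor of $\Pa$, which is exactly the asymmetry the paper flags immediately after defining neighbors, and the situation drawn in the non-regular crosses of Figures~\ref{nondeg4} and~\ref{nondeg3}. Such an incoming arm is automatically truncated flush with the top edge of $\Pa$, because it consists precisely of the orthogonal segments landing on $\Pa$; it is this flushness, not a contradiction with the minimality in the neighbor relation, that carries the boundary straight past the exposed corner. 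Since you explicitly defer the entire case analysis to this one template argument, the proof as proposed cannot be completed as written.

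A second, smaller soft spot is the side-length bound. Ruling out three collinear pieces (for instance an arm edge, a full plaque edge, and another arm edge, totalling up to $3N_1$) is not accomplished by saying that a third piece would introduce a perpendicular edge; one must know that \emph{every} edge of every plaque emits an outgoing arm to some neighbor --- a consequence of the maximality of $\calb_1$, equivalently the $N_1$-density of $C_1$ --- so that a single cross can border at most the portion of a plaque edge lying between that outgoing arm and a corner, which has length $<N_1$, plus one flush arm edge of length $\leq N_1$. With the incoming-arm observation above and this remark your strategy can be pushed through (and would then be a genuinely more rigorous route than the paper's count-and-picture argument), but as it stands both of its key steps have real gaps.
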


\begin{proof}
	Since each plaque of $\calb_1$ has at most $4$ neighbors, the boundary of each
	connected component meets $3$ or $4$ plaques of $\calb_1$ and $3$ or $4$ arms
	between these plaques. In both cases, the connected component is a rectangle
	of side length at most $2N_1$, as can be see in Figures~\ref{nondeg4}
	and \ref{nondeg3}.
\end{proof}

\begin{remark}
	It is also interesting to note that there could be degenerate arms and crosses,
	such as those illustrated in the Figure~\ref{deg}.
\end{remark}

	\begin{figure}
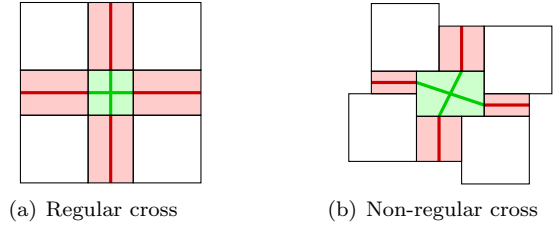

		\subfigure[Regular cross]{
			\begin{mosaicoPBC}[scale=.3]
				\Pl{3}\Br|{2}{3}\Pl{3}\Nl{3}
				\Br-{3}{2}\Cr{2}{2}\Br-{3}{2}\Nl{2}
				\Pl{3}\Br|{2}{3}\Pl{3}
			\end{mosaicoPBC}
			\label{regarm4}
		}
		\hspace{1cm}
		\subfigure[Non-regular cross]{
			\begin{mosaicoPBC}[scale=.3]
				\Va{1}\Pl{3}\Nl{1}
				\Va{4}\Br|{2}{2}\Pl{3}\Nl{2}
				\Va{1}\Br-{2}{1}\Cr{3}{2}\Nl
				\Pl{3}\Va{3}\Br-{2}{1}\Nl
				\Va{3}\Br|{2}{2}\Pl{3}
			\end{mosaicoPBC}
			\label{nonregarm4}
		} 
		\caption{Crosses with $4$ arms}
		\label{nondeg4}
	\end{figure}

	\begin{figure}
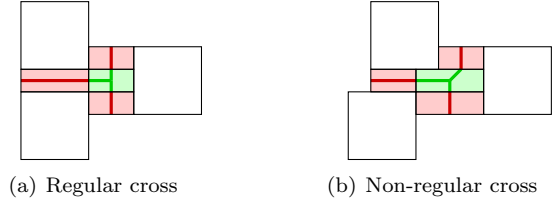

		\subfigure[Regular cross]{
			\begin{mosaicoPBC}[scale=.3]
				\Pl{3}\Nl{2}
				\Va{3}\Br|{2}{1}\Pl{3}\Nl
				\Br-{3}{1}\Cr{2}{1}\Nl
				\Pl{3}\Br|{2}{1}\Nl
			\end{mosaicoPBC}
			\label{regarm3}
		}
		\hspace{1cm}
		\subfigure[Non-regular cross]{
			\begin{mosaicoPBC}[scale=.3]
				\Va{1}\Pl{3}\Nl{2}
				\Va{4}\Br|{2}{1}\Pl{3}\Nl
				\Va{1}\Br-{2}{1}\Cr{3}{1}\Nl
				\Pl{3}\Br|{3}{1}
			\end{mosaicoPBC}
			\label{nonregarm3}
		}
		\caption{Crosses with $3$ arms}
		\label{nondeg3}
	\end{figure}

\begin{figure}
\subfigure[$4$ arms]{
\begin{mosaicoPBC}[scale=.3]
\Pl{3}\Br|{0}{3}\Pl{3}\Nl{3}
\Br-{3}{2}\Cr{0}{2}\Br-{3}{2}\Nl{2}
\Pl{3}\Br|{0}{3}\Pl{3}
\draw[green,very thick] (2.9,3) -- (2.9,5);
\draw[green,very thick] (3.1,3) -- (3.1,5);
\draw[red,very thick] (2.9,0) -- (2.9,3);
\draw[red,very thick] (3.1,0) -- (3.1,3);
\draw[red,very thick] (2.9,5) -- (2.9,8);
\draw[red,very thick] (3.1,5) -- (3.1,8);
\end{mosaicoPBC}
\label{deg3exit}
}
\hspace{2cm}
\subfigure[$3$ arms]{
\begin{mosaicoPBC}[scale=.075]
\Pl{12}\Br|{0}{12}\Pl{12}\Nl{12}
\Va{6}\Br-{6}{8}\Cr{0}{8}\Br-{6}{8}\Nl{8}
\Va{6}\Pl{12}
\draw[red,very thick] (11.6,0) -- (11.6,12);
\draw[red,very thick] (12.4,0) -- (12.4,12);
\draw[green,very thick] (11.6,12) -- (11.6,20);
\draw[green,very thick] (12.4,12) -- (12.4,20);
\end{mosaicoPBC}
\label{deg2exit}
}
\end{figure}

\begin{figure}
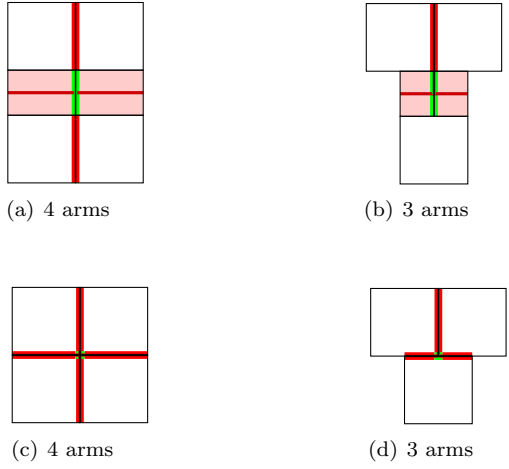

\subfigure[$4$ arms]{
\begin{mosaicoPBC}[scale=.3]
\Pl{3}\Br|{0}{3}\Pl{3}\Nl{3}
\Br-{3}{0}\Cr{0}{0}\Br-{3}{0}\Nl{0}
\Pl{3}\Br|{0}{3}\Pl{3}
\draw[red,very thick] (2.9,0) -- (2.9,2.85);
\draw[red,very thick] (3.1,0) -- (3.1,2.85);
\draw[red,very thick] (2.9,3.1) -- (2.9,6);
\draw[red,very thick] (3.1,3.1) -- (3.1,6);
\draw[red,very thick] (0,2.9) -- (2.8,2.9);
\draw[red,very thick] (0,3.1) -- (2.8,3.1);
\draw[red,very thick] (3.2,2.9) -- (6,2.9);
\draw[red,very thick] (3.2,3.1) -- (6,3.1);
\draw[green,very thick] (2.8,2.9) -- (3.2,2.9);
\draw[green,very thick] (2.8,3.1) -- (3.2,3.1);
\end{mosaicoPBC}
\label{deg1exit4}
}
\hspace{2cm}
\subfigure[$3$ arms]{
\begin{mosaicoPBC}[scale=.075]
\Pl{12}\Br|{0}{12}\Pl{12}\Nl{12}
\Va{6}\Br-{6}{0}\Cr{0}{0}\Br-{6}{0}\Nl{0}
\Va{6}\Pl{12}
\draw[red,very thick] (11.6,0) -- (11.6,11.4);
\draw[red,very thick] (12.4,0) -- (12.4,11.4);
\draw[red,very thick] (6,11.6) -- (11.3,11.6);
\draw[red,very thick] (6,12.4) -- (11.3,12.4);
\draw[red,very thick] (12.7,12.4) -- (18,12.4);
\draw[red,very thick] (12.7,11.6) -- (18,11.6);
\draw[green,very thick] (11.3,11.6) -- (12.7,11.6);
\draw[green,very thick] (11.3,12.4) -- (12.7,12.4);
\end{mosaicoPBC}
\label{deg1exit3}
}
\caption{Degenerate crosses and arms}
\label{deg}
\end{figure}

\subsection{Robinson inflation.}
Using arms and crosses, we can replace the partial box decomposition $\calb_1$
with a complete box decomposition $\pb_1$. In this  step, we modify
this decomposition so that it becomes inflated from the initial decomposition $\calb$
by unit square plaques. We start by introducing some definitions: 

\begin{definition}
	i) The middle point of the intersection of each arm with a cross is called
	an \emph{exit point} of the cross. Each side of a cross contains at most
	$1$ exit point and each nondegenerate cross has at least $3$
	 exit points, see  Figures~\ref{nondeg4}~and~\ref{nondeg3}. They are \emph{positive} or 
	 \emph{negative} end points of	the axes (relatively to the usual positive orientation). 
	 If the cross is degenerated, there are still at least $3$ exit points counting multiplicities, see 
	Figure~\ref{deg}. 
	\medskip 

	\noindent
	ii) Each cross is decorated with a graph which is obtained by joining the center of mass with the
	exit points. In the nondegenerate case, this decoration separates the cross into $3$
	or $4$ regions. In the degenerate case, the cross reduces to a common
	side of two different arms (decorated with the middle point contained in the two
	axes) or a single point (which is the intersection of the degenerate arms). Each
	of these regions is called a \emph{cross-sector}. We also use the term
	\emph{cross-sector} to refer to the union of $\calp$-tiles that meet the original cross-sector. 
	As explained in \cite{BBG}, it is irrelevant where $\calp$-tiles which meet several sectors are  
	included, although for simplicity we shall assume such a  $\calp$-tiles are included 
	in the sectors pointing right and upward. 
	\medskip 

	\noindent
	iii) Let $\Pa$ be a plaque of the partial box decomposition $\calb_1$. We denote by
	$\pp$ the union of $\Pa$ with the half-arms and the cross-sectors  meeting $\Pa$. Replacing 
	these half-arms and cross-sectors with corresponding half-arms and cross-sectors made of 
	$\calp$-tiles as above, we obtain another planar set $\ppp$. Now  $\ppp$  is a tile of a planar tiling 
	$\calt''$ inflated from $\calt$  and a plaque of a box decomposition $\ppb_1$ inflated 
	from $\calb$. We  call this process  \emph{Robinson inflation}.
\end{definition}

\begin{lemma}
	\label{isoperimetric1}
	Any tile $\pp$ and any inflated tile $\ppp$ contain a square of side $N_1$, and they are contained 
	in a square of side $3N_1$. Thus their areas  $\area(\pp)$ and  $\area(\ppp)$ are comprised 
	between $N_1^2$ and $9 N_1^2$.
	If $\pp$ and $\ppp$ are associated to the same plaque $\Pa$ of the partial box
	decomposition $\calb_1$, then 
	\[
		\area(\ppp) \geq \area(\pp) - \longi(\partial \pp) \area(\Pa_0) =
			\area(\pp) - \longi(\partial \pp) 
	\]
	and
	\[
		\longi(\partial \ppp) \leq \longi(\partial \pp) \longi(\partial \Pa_0) \leq 64 N_1
	\]
	where $\Pa_0$ is the square prototile of $\calt$. 
\end{lemma}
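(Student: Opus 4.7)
The plan is to handle the three assertions separately, invoking the preceding lemma on arm dimensions and the fact that $\calp$-tiles are unit squares (after the Sadun--Williams reduction).

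For the size bounds, both $\pp$ and $\ppp$ contain the plaque $\Pa$, which is a square of side $N_1$, giving $\area(\pp),\area(\ppp) \geq N_1^2$. For the upper bound, the preceding lemma shows that any arm has length and width at most $N_1$, so a half-arm attached to an edge of $\Pa$ extends perpendicularly by at most $N_1/2$; and since a cross is a rectangle with sides at most $2N_1$, a cross-sector at a corner of $\Pa$ extends at most $N_1$ in each direction from that corner. Hence $\pp$ is contained in the $3N_1\times 3N_1$ square concentric with $\Pa$, and $\ppp$ satisfies the same containment up to the upper-right convention for shared tiles; this yields the upper bound $\area(\pp),\area(\ppp)\leq 9N_1^2$.

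For the area inequality, $\pp$ and $\ppp$ agree except on $\calp$-tiles meeting $\partial\pp$, each of which is included in or excluded from $\ppp$ according to the upper-right convention. Since a polygonal curve of length $\ell$ meets at most $\ell$ unit squares, the number of boundary tiles is at most $\longi(\partial\pp)$, and each contributes area $\area(\Pa_0)=1$; summing gives $\area(\ppp) \geq \area(\pp) - \longi(\partial\pp)\,\area(\Pa_0)$. The same counting yields the perimeter inequality: $\partial\ppp$ lies in the union of the boundaries of the at most $\longi(\partial\pp)$ boundary tiles, each of perimeter $\longi(\partial\Pa_0)=4$, giving $\longi(\partial\ppp)\leq \longi(\partial\pp)\,\longi(\partial\Pa_0)$.

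To close with the numerical bound, I would show that $\longi(\partial\pp)\leq 16N_1$. The plaque $\Pa$ contributes at most $4N_1$; each of the at most four half-arms contributes to $\partial\pp$ at most its axis (of length $\leq N_1$) together with its two short sides (of length $\leq N_1/2$), totalling $2N_1$; each of the at most four cross-sectors contributes at most two segments from the center of mass of the cross to exit points, each of length $\leq N_1$, for a total of $2N_1$. Adding these and subtracting the segments shared with $\Pa$ and with adjacent pieces (which lie in the interior of $\pp$) gives the bound $16N_1$, hence $\longi(\partial\ppp)\leq 64N_1$.

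The main obstacle is the careful treatment of degenerate arms and crosses, noted in the remark preceding the definition of Robinson inflation: exit points may coincide, some boundary segments collapse, and the cross may degenerate to a segment or a single point. The bound $16N_1$ must be verified in these cases as well, but since only the order of magnitude matters for the forthcoming isoperimetric estimates, a slightly worse universal constant would be equally acceptable. The factor-of-four perimeter inflation coming from the $\calp$-tile approximation is standard and insensitive to these degeneracies.
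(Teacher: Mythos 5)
Your proposal is correct and follows essentially the same route as the paper: the $\pp$ versus $\ppp$ estimates come from counting the unit $\calp$-tiles met by $\partial \pp$, and the numerical bound comes from the dimension bounds on arms and crosses. The only divergence is the bookkeeping for $\longi(\partial \pp) \leq 16N_1$ --- the paper bounds the side-length increase of the circumscribing square by $2\sqrt{2}N_1 \leq 3N_1$ (two crosses per edge times the half-diagonal of a $2N_1$-square) rather than itemizing boundary pieces, and in your itemization the center-to-exit-point segments should be bounded by $\sqrt{2}N_1$ rather than $N_1$; but, as you note, only the order of the constant matters for Proposition~\ref{isoperimetric2}.
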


\begin{proof}
	Firstly, since $\ppp$ is obtained from $\pp$ by adding or removing $\calp$-tiles, we
	have that
	\[
		\area(\ppp) \geq \area(\pp) - \longi(\partial \pp) \area(\partial \Pa_0)
	\]
	and 
	\[
		\longi(\partial \ppp) \leq \longi(\partial \pp) \longi(\partial \Pa_0) \medskip 
	\]
	where $\area(\Pa_0) = 1$ and $\longi(\partial \Pa_0) = 4$. On the other hand,
	when we replace $\Pa$ with $\pp$, the side length increases by at most
	$2\sqrt{2}N_1 \leq 3 N_1$,  where $2\sqrt{2}N_1$ is the product of the maximum 
	number of nondegenerate crosses that meet each edge of $\Pa$ (equal to $2$) and the 
	half-diagonal of the square of side $2N_1$ (equal to $\sqrt{2}N_1$). Thus
	$\longi(\partial \pp) \leq 4(N_1+3N_1) = 16 N_1$
	and we have finished.
\end{proof}

Arguing inductively, we have the following theorem: 

\begin{theorem}
	\label{Robinsoninflation}
	There is a sequence of  box decompositions $\ppb_n$ such that  $\ppb_0=\calb$ and
	$\ppb_{n+1}$ is obtained by Robinson inflation of $\ppb_{n}$. \qed
\end{theorem}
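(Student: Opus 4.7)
The plan is to proceed by induction on $n$, taking $\ppb_0 = \calb$ as the base case and constructing $\ppb_{n+1}$ from $\ppb_{n}$ by repeating verbatim the three-stage procedure of Section~\ref{buenainflacion}, but with the roles of the unit $\calp$-tiles and the initial box decomposition $\calb$ played by the plaques of $\ppb_n$ and the decomposition $\ppb_n$ itself. The inductive hypothesis I carry forward is that each plaque $\ppp$ of $\ppb_n$ contains a square of side $N_n$ and is contained in a square of side $3N_n$, with $\longi(\partial \ppp) \leq 16 N_n$; this is precisely Lemma~\ref{isoperimetric1} reproduced at the $n$-th level.

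For the inductive step, first I would choose a new scale $N_{n+1}$ large compared to the diameter bound $3N_n$ of $\ppb_n$-plaques, say $N_{n+1} \geq C N_n$ with $C \geq 10$. Using compactness of $\X$ and the Finite Pattern Condition, I would extract a finite maximal family of flow boxes $\B_{n+1,i} \cong \Pa_{n+1,i} \times C_{n+1,i}$ whose plaques are squares of side $N_{n+1}$ and whose transversal is $N_{n+1}$-dense; this is the partial box decomposition at level $n+1$. Rerun the arm and cross construction: since the bounds on arm length and width and on the dimensions of crosses only used maximality and $N_{n+1}$-density, not the unit-square nature of $\calp$, the corresponding lemmas carry over word for word. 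Decorate each cross by joining its center of mass to its exit points, form cross-sectors, attach to each square plaque $\Pa_{n+1,i}$ its adjacent half-arms and cross-sectors to obtain $\pp$, and then replace the underlying pieces by unions of $\ppb_n$-plaques using the right/upward convention to obtain $\ppp$. These $\ppp$ are the plaques of $\ppb_{n+1}$.

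The main obstacle I expect is the snapping step: the geometric decoration lines of a cross need not respect $\ppb_n$-plaques, so some $\ppb_n$-plaques of diameter up to $3N_n$ are cut by these lines, and one must verify that the resulting $\ppb_{n+1}$-plaques are clean unions of $\ppb_n$-plaques with isoperimetric constants that propagate through the induction. The right/upward convention resolves the ambiguity consistently and locally, so no global matching problem arises. Because $N_{n+1} \gg N_n$, the error introduced by snapping is at most $O(N_n)$ around each cut, which is absorbed in the side-length bound $3N_{n+1}$ and the perimeter bound $16 N_{n+1}$ obtained from the analogue of Lemma~\ref{isoperimetric1} at the new level, closing the induction. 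Finally, I would check the remaining items: the total transversal at level $n+1$ is clopen in $X$ because it is obtained from the clopen transversals of $\ppb_n$ by clopen selections, each $\ppb_{n+1}$-plaque contains a $\ppb_n$-plaque in its interior (guaranteed by $N_{n+1} > 3 N_n$), and the change-of-coordinate maps of the new flow boxes come from those of $\ppb_n$ by restriction, so that $\ppb_{n+1}$ is inflated from $\ppb_n$ in the sense of Theorem~\ref{thinflac}.
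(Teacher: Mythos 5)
Your proposal is correct and follows essentially the same route as the paper, which simply states the theorem with the phrase ``arguing inductively'' and relies on repeating the one-step construction of \S\ref{buenainflacion} (maximal partial decomposition by squares of side $N_{n+1}$, arms, crosses, decorations, cross-sectors, and snapping to $\ppb_n$-plaques via the right/upward convention) with the plaques of $\ppb_n$ in place of the unit $\calp$-tiles. Your explicit tracking of the isoperimetric bounds through the induction is exactly what the paper records in the inequalities following the theorem and in Proposition~\ref{isoperimetric2}, where the stronger choice $N_{n+1}\geq N_n^3$ is only needed to make the isoperimetric ratio tend to zero, not for the existence of the sequence itself.
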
 

We denote by $\calp'_n$ and $\calp''_n$ the sets of the plaques of $\pb_n$ and $\ppb_n$ respectively. Then $\pb_n$ and $\ppb_n$ induce tilings $\calt'_n \in \T(\calp'_n)$ and 
 $\calt''_n \in \T(\calp''_n)$ on each leaf of $\X$. By construction, each plaque $\pp_{n+1}$ of the box decomposition $\pb_{n+1}$  is a $\calp'_n$-patch, which is constructed from
a square of side $N_{n+1}$ and contained in a square of side $3N_{n+1}$. 
In the same way, each plaque $\ppp_{n+1}$ of the inflated box decomposition $\ppb_{n+1}$ is a
 $\calp''_n$-patch. Thus, the tiling $\calt''_{n+1} \in \T(\calp''_{n+1})$ induced by $\ppb_{n+1}$ is 
obtained by Robinson inflation from the tiling
$\calt''_n \in \T(\calp''_n)$ induced by $\ppb_n$. As in the proof of
Lemma~\ref{isoperimetric1}, we can see that
\[
	\area(\ppp_{n+1}) \geq \area(\pp_{n+1}) - \longi(\partial \pp_{n+1}) A_n
\]
and
\[
	\longi(\partial \ppp_{n+1}) \leq \longi(\partial \pp_{n+1}) L_n \medskip
\]
where $A_n = \max \{\, \area(\ppp_n) \tq \ppp_n \in \calp''_n \,\}$ and 
$L_n = \max \{\, \longi(\ppp_n) \tq \ppp_n \in \calp''_n \,\}$. On the other hand, 
we have that
\[
	\area(\pp_{n+1}) \geq N_{n+1}^2 \quad \mbox{and} \quad 
		\longi(\partial \pp_{n+1}) \leq 16 N_{n+1}.
\]
Finally, we can assume that $L_n \leq A_n$ when $n$ is chosen large enough.

\begin{proposition}
	\label{isoperimetric2}
	There is a sequence of positive integers $N_n$ such that the isoperimetric ratio 
	\[
		\frac{\longi(\partial \ppp_n)}{\area(\ppp_n)} \to 0
	\]
	as $n\to \infty$. 
\end{proposition}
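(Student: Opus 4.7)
The plan is to choose the sequence $(N_n)$ of side lengths recursively so that each $N_{n+1}$ dominates the previous area $A_n$ by a margin that grows geometrically.

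First, I would iterate Lemma~\ref{isoperimetric1} level by level: each plaque $\ppp_n$ contains a square of side $N_n$ and is contained in a square of side $3 N_n$, so at every stage
\[
  N_n^2 \leq A_n \leq 9 N_n^2 \qquad \text{and} \qquad L_n \leq 16 N_n.
\]
In particular $L_n \leq A_n$ as soon as $N_n \geq 16$, which legitimizes the hypothesis already used to write down the two recursive inequalities immediately preceding Proposition~\ref{isoperimetric2}.

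Next, I would insert the crude bounds $\longi(\partial \pp_{n+1}) \leq 16 N_{n+1}$ and $\area(\pp_{n+1}) \geq N_{n+1}^2$ into those recursive inequalities to obtain
\[
  \frac{\longi(\partial \ppp_{n+1})}{\area(\ppp_{n+1})}
    \leq \frac{16 N_{n+1}\, L_n}{N_{n+1}^2 - 16 N_{n+1}\, A_n}
    \leq \frac{16 A_n}{N_{n+1} - 16 A_n},
\]
which is meaningful as soon as $N_{n+1} > 16 A_n$.

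Then I would define $N_{n+1}$ inductively by requiring both $N_{n+1} \geq 16$ and $N_{n+1} \geq 2^{n+5} A_n$. Since $A_n$ is entirely determined once $N_n$ is chosen (and is in any case bounded by $9 N_n^2$), such a choice is always available. With this choice $N_{n+1} - 16 A_n \geq \tfrac{1}{2} N_{n+1}$, hence
\[
  \frac{\longi(\partial \ppp_{n+1})}{\area(\ppp_{n+1})}
    \leq \frac{32 A_n}{N_{n+1}} \leq \frac{32}{2^{n+5}} = \frac{1}{2^n},
\]
which tends to $0$ as $n \to \infty$.

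The only delicate point is to make sure the subtractive correction $\longi(\partial \pp_{n+1}) A_n$ in the lower bound for $\area(\ppp_{n+1})$ does not swallow the leading term $N_{n+1}^2$. The recursive gap condition $N_{n+1} \geq 2^{n+5} A_n$ is tailored precisely to prevent this, and so I expect no serious obstacle beyond the recursive bookkeeping; essentially, the quadratic growth of $\area(\pp_{n+1})$ in $N_{n+1}$ versus the linear growth of $\longi(\partial \pp_{n+1})$ leaves ample room to outpace the fixed datum $A_n$ from the previous level.
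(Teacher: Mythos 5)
Your proof is correct and follows essentially the same route as the paper: both start from the two recursive inequalities stated just before the proposition, insert $\area(\pp_{n+1}) \geq N_{n+1}^2$ and $\longi(\partial \pp_{n+1}) \leq 16 N_{n+1}$, and then pick $N_{n+1}$ large enough relative to the level-$n$ data (the paper takes $N_{n+1} \geq N_n^3$, you take $N_{n+1} \geq 2^{n+5} A_n$; both work, and yours has the minor bonus of an explicit rate $2^{-n}$). The one inaccuracy is your claim $L_n \leq 16 N_n$: for the inflated tiles $\ppp_n$ the recursion only gives $L_{n+1} \leq 16 N_{n+1} L_n$, so this bound is not available at every level; however, the fact you actually need, $L_n \leq A_n$, still follows by induction from your own gap condition (e.g.\ $L_{n+1} \leq 16 N_{n+1} L_n \leq 16 N_{n+1} A_n \leq N_{n+1}^2 \leq A_{n+1}$), so the argument stands.
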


\begin{proof}
	According to the previous inequalities, if we choose $N_{n+1} \geq N_n^3$,
	then the isoperimetric ratio
	
	\begin{align*}
		\frac{\longi(\partial \ppp_{n+1})}{\area(\ppp_{n+1})} & \leq  
		\frac{\longi(\partial \pp_{n+1})L_n}{\area(\pp_{n+1}) - \longi(\partial \pp_{n+1}) A_n } \\
		& \leq  \frac{\longi(\partial \pp_{n+1})A_n}{\area(\pp_{n+1}) - \longi(\partial \pp_{n+1}) A_n } \\
		& =  \frac{1}{\frac{\area(\pp_{n+1})}{ \longi(\partial \pp)}  \frac{1}{A_n} - 1}  \\ 
		& \leq   \frac{1}{ \frac{N_{n+1}^2}{16N_{n+1}}\frac{1}{9N_n^2} - 1} 
		 =  \frac{(12N_n)^2}{N_{n+1} - (12N_n)^2}
	\end{align*}
	converges to $0$.
\end{proof} 

\section{Properties of the boundary for Robinson inflation} \label{sboundary}

In this section, we replace the initial  box decompositions $\calb^{(n)}$ by
the box decompositions $\ppb_n$ provided by Theorem~\ref{Robinsoninflation}, but
we  keep all of the notations introduced in Section~\ref{schema}. Thus, according to
Proposition~\ref{minimal}, we have a minimal open AF equivalence subrelation 
$\calr_\infty= \varinjlim \calr_n$ of the equivalence relation $\calr$. Our first aim is to prove Proposition~\ref{thin}: 

\begin{proof}[Proof of Proposition~\ref{thin}]
	Let $\mu$ be a $\calr_\infty$-invariant probability measure on $X$. For each discrete flow box 
	$B''_n \cong P''_n \times X''_n$ defined from an element of $\ppb_n$, we have that
	\[
		\mu(B''_n) =  \card P''_n \mu(X''_n)
	\quad \mbox{ and } \quad 
		\mu(\partial_v B_n)= \card \partial P''_n \mu(X''_n)
	\]
	where $\card P''_n$ and $\card \partial P''_n$
	denote the number of elements of the discrete plaque $P''_n$ and its
	boundary $\partial P''_n$. For each $n \in \N$, it follows
	that 
	\begin{align*}
		\mu(\partial \calr_n) & = \sum_{\B''_n \in \ppb_n} \mu(\partial_v B''_n) \\
		& = \sum_{\B''_n \in \ppb_n} \card \partial P''_n \mu(X''_n) \\
		& = \sum_{\B''_n \in \ppb_n} \frac{\card \partial P''_n}{\card P''_n} \mu(B''_n) \\
		& \leq  \max_{\Pa''_n \in \calp''_n}  \Big\{ \frac{\card \partial P''_n}{\card P''_n} \Big\} 
			\sum_{\B''_n \in \ppb_n} \mu(B''_n)  \\
		& =  \max_{\Pa''_n \in \calp''_n} \Big\{\frac{\card \partial P''_n}{\card P''_n} \Big\} \, \mu(X) 
			=   \max_{\Pa''_n \in \calp''_n} \Big\{\frac{\card \partial P''_n}{\card P''_n} \Big\} 
	\end{align*}
	Since the leaves of the continuous hull $\X$ are quasi-isometric to the 
	$\calr$-equivalence classes in $X$, 
	Proposition~\ref{isoperimetric2} implies that 
	\[
		\lim_{n \to \infty} \frac{\card \partial P''_n}{\card P''_n} = 0
	\]
	for all $\Pa''_n \in \calp''_n$. In fact, by replacing the $\max$-metric along the leaves with
	the discrete metric (defined as the minimum length of the paths of $\calp$-tiles
	connecting two points), we can assume that
	$\longi(\partial \ppp_n) = \card \partial P''_n$ and $\area(\ppp_n) = \card P''_n$.
	Anyway, we have that
	\[
		\mu(\partial \calr_\infty)= \lim_{n\rightarrow \infty} \mu(\partial \calr_n)  = 0
	\]
	and then $\partial \calr_\infty$ is $\calr_\infty$-thin. 
\end{proof}

As announced, we are interested in a key property of the boundary:

\begin{proposition}
	\label{finiteness}
	Any $\calr$-equivalence class separates into at most four $\calr_\infty$-equiva\-lence classes. 
\end{proposition}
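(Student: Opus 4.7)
The plan is to identify the $\calr_\infty$-equivalence classes inside the $\calr$-class of $\calt$ with the connected components of $L-\Gamma$, where $L=L_\calt$ is the leaf through $\calt$ and $\Gamma=\Gamma_\calt$ is the common boundary graph introduced just before Proposition~\ref{boundary}. As already noted there, the two counts coincide, so the statement reduces to showing that $L-\Gamma$ has at most four connected components.

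The key structural observation is that, by property~(ii) of Theorem~\ref{thinflac}, the graphs $\Gamma^{(n)}=\bigl(\bigcup_{\B\in\ppb_n}\partial_v\B\bigr)\cap L$ form a \emph{decreasing} sequence in $L$, so that $\Gamma=\bigcap_n\Gamma^{(n)}$. The approach is then a local-to-global counting argument. Fix a base point $p_0\in L$ and a radius $R>0$, and let $B_R=B(p_0,R)\subset L$. By construction of the Robinson inflation, the square cores of the level-$n$ plaques are pairwise disjoint squares of side $N_n$, so any two distinct centers of level-$n$ crosses, as well as any two parallel arm axes at level $n$, lie at mutual distance at least $N_n$. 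Choosing $n$ with $N_n>2R$, the ball $B_R$ therefore contains at most one level-$n$ cross center and meets at most one horizontal and one vertical arm axis of $\Gamma^{(n)}$. In particular, $\Gamma^{(n)}\cap B_R$ has at most four branches reaching $\partial B_R$, and $B_R-\Gamma^{(n)}$ has at most four connected components. Since $\Gamma\subset\Gamma^{(n)}$, adding back the points of $\Gamma^{(n)}\setminus\Gamma$ can only merge components, so $B_R-\Gamma$ also has at most four connected components.

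The passage from balls to the whole leaf is routine: if $L-\Gamma$ had at least five components, one would pick a representative point in each and take $R$ large enough for $B_R$ to contain all the chosen points. Since any path inside $B_R-\Gamma$ is also a path inside $L-\Gamma$, the five representatives would then lie in five different components of $B_R-\Gamma$, contradicting the previous bound. Hence $L-\Gamma$ has at most four components, and the proposition follows.

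The main obstacle I expect is the careful verification of the local count in the second paragraph. The plaques $\ppp_n$ are not squares but squares with half-arms and cross-sectors attached, and each cross carries an internal decoration (segments joining its center of mass to the exit points) which need not be axis-parallel when the cross is non-regular. One has to verify that, no matter how these pieces fit together, no more than four arms of $\Gamma^{(n)}$ can reach $\partial B_R$ once $N_n>2R$; this ultimately rests on the lower bound $N_n$ for the distance between distinct cross centers and between parallel arm axes at level $n$, which is built into the construction of the partial box decomposition $\calb_n$ by disjoint squares of side $N_n$.
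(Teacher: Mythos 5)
Your overall strategy is sound and runs parallel to the paper's: both reduce the statement to bounding the number of connected components of $L-\Gamma$, and both do so by examining which level-$n$ pieces can remain inside a fixed bounded region as $n\to\infty$ (what the paper calls \emph{virtual arms} and \emph{virtual crosses}). The global part of your argument (five representatives inside a large ball $B_R$, passing from $\Gamma$ to $\Gamma^{(n)}$) is fine. The gap is exactly where you say you expect it, and it is not a routine verification: the separation claim on which your local count rests is false. Disjointness of the level-$n$ square cores does \emph{not} imply that distinct cross centers, or parallel arm axes, are $N_n$-separated. The paper's Case~4 (Figure~\ref{twovc}) exhibits two level-$n$ crosses joined by an arm of \emph{bounded} length, together with several parallel arm axes at bounded mutual distance; this configuration persists for every $n$ and violates your premise that $B_R$ contains at most one cross center and meets at most one horizontal and one vertical axis. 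Your local count, as justified, simply misses the two-cross configurations, and it is precisely these that must be analysed to see that they still yield only four components: the two crosses are then forced to have three exit points each and to share two adjacent squares, so together they are adjacent to only four plaques, and a chain of three or more nearby crosses is impossible because two consecutive gaps in the same row of side-$N_n$ squares are at distance at least $N_n$ --- this is where the disjointness of the cores actually enters, not where you put it.

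What is needed in place of your one-line separation claim is a classification of the possible bounded-scale configurations: no virtual feature (one class), a single virtual arm (two classes), a single virtual cross with three or four exit points counted with multiplicity (three or four classes), or two virtual crosses joined by a bounded arm (four classes). This case analysis is essentially the entire content of the paper's proof of Proposition~\ref{finiteness}. A second, smaller defect: the number of components of $B_R-\Gamma^{(n)}$ is not bounded by the number of plaques meeting $B_R$, because the plaques are non-convex (a square with half-arms and cross-sectors attached can meet $B_R$ in several pieces). It is cleaner to argue that your five representatives lie, for $n$ large, in five pairwise distinct level-$n$ plaques all meeting $B_R$, and then to bound the number of distinct plaques that can meet a ball of radius $R\ll N_n$ --- which again requires the case analysis above rather than a packing bound on the square cores alone.
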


We shall demonstrate that each tree $\Gamma = \Gamma_\calt$ contained in the continuous boundary $\partial_c \calr_\infty$  separates the corresponding leaf $L = L_\calt \subset \X$ into at most $4$ connected components. Firstly, 
we need to introduce some definitions and distinguish some  cases. Let us recall that we have a sequence of box decompositions $\pb_n$ of $\X$ whose plaques are obtained from  squares $\Pa_n$ of side $N_n$, arms $A_n$ and crosses $C_n$. In the rest of section, we shall assume all these plaques belong to the same leaf $L$ of $\X$.

\begin{definition}
We call \emph{virtual arm} of the inflation process a sequence of arms $A_n$ whose axes $a_n$ are contained in a horizontal or vertical ribbon of constant width. Similarly, a \emph{virtual cross} is a sequence of crosses $C_n$ whose centers of mass $c_n$ are contained in a square of constant side.  Note that the axes and centers of mass can oscillate in the interior of the ribbon or square, see Figure~\ref{virtual}. 
\end{definition}

\begin{proof}[Proof of Proposition~\ref{finiteness}] To prove this result, we distinguish some cases:
\medskip 

\paragraph*{\em {\bf Case 1)} There are neither axes nor virtual crosses.} 
In this case, $L$ does not intersect $\partial_c \calr_\infty$. This
means that $\calr_\infty[\calt] = \calr[\calt]$ for all $\calt \in L \cap X$. 
\medskip 

\paragraph*{\em {\bf Case 2)} There is a virtual axis, but there are no virtual crosses.} By definition, there is a sequence of arms $A_n$ whose axes $a_n$ remain in the interior of a ribbon of constant width. There are now two possibilities: 
\medskip 

\paragraph*{\em {\bf Subcase 2.1)} The positive and negative end points of the axes $a_n$ do not remain in the interior of  a square of constant side.} Thus, all the axes $a_n$ grow in both opposite (horizontal or vertical) directions determined by the axis of the ribbon. For simplicity, we  say that the axes $a_n$ grow \emph{on the left and right} in the horizontal case and \emph{upward and downward} in the vertical case. In both cases, since $\partial_c \calr_\infty$ intersect $L$ in a tree without terminal edges (see Proposition~\ref{boundary}), which is contained in the horizontal or vertical ribbon,  this intersection separates $L$ into $2$ connected components, and therefore $\calr[\calt]$ separates into two $\calr_\infty$-equivalence classes. 
\medskip 

\begin{figure}
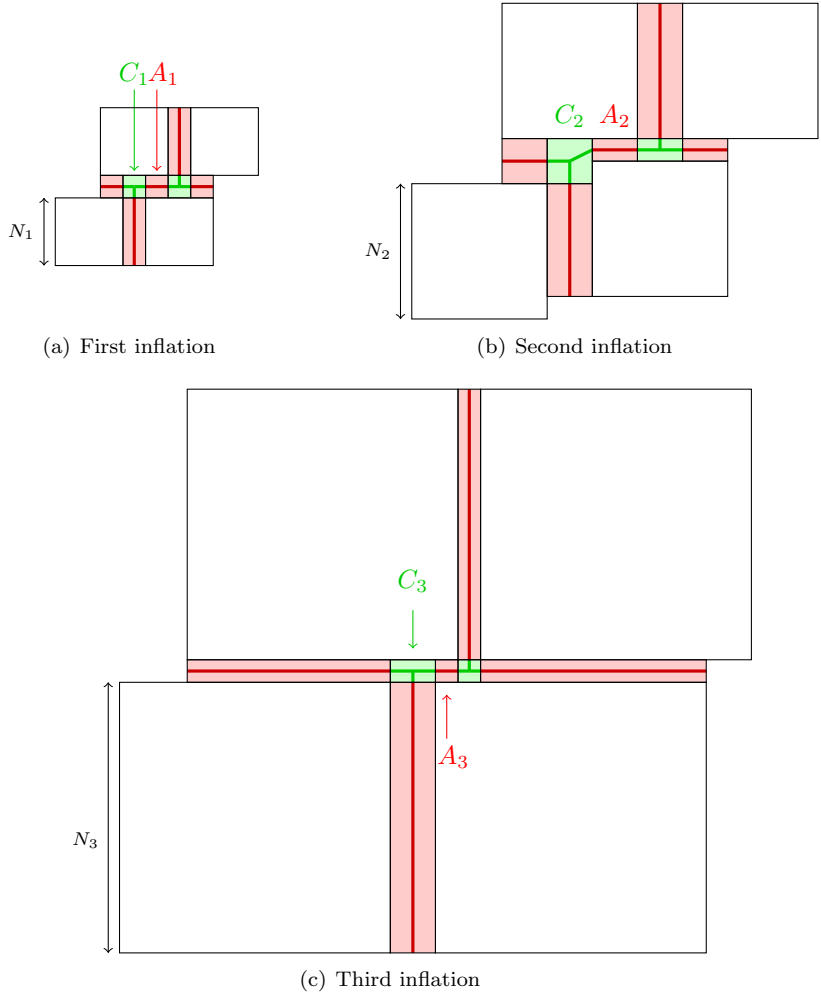

\begin{minipage}{1.4in}
\centering
\begin{mosaicoPBC}[scale=.3]
\Va{2}\Pl{3}\Br|{1}{3}\Pl{3}\Nl{3}
\Va{2}\Br-{1}{1}\Cr{1}{1}\Br-{1}{1}\Cr{1}{1}\Br-{1}{1}\Nl{1}
\Pl{3}\Br|{1}{3}\Pl{3}\Nl
\node at (3.5,-1.5) [green!80!black] {$C_1$};
\path[->]  (3.5,-0.8) edge[green!80!black] (3.5,2.8);
\node at (4.8,-1.5) [red] {$A_1$};
\path[->]  (4.5,-0.8) edge[red] (4.5,2.8);
\path[<->] (-0.5,4) edge (-0.5,7);
\node at (-1.5,5.5) {$\scriptstyle N_1$};
\end{mosaicoPBC}
\end{minipage}
\hspace{1cm}
\begin{minipage}{2.2in}
\centering
\begin{mosaicoPBC}[scale=.3]
\Va{4}\Pl{6}\Br|{2}{6}\Pl{6}\Nl{6}
\Va{4}\Br-{2}{2}\Cr{2}{2}\Br-{2}{1}\Cr{2}{1}\Br-{2}{1}\Nl{1}
\Va{8}\Pl{6}\Nl
\Pl{6}\Br|{2}{5}\Nl
\node at (9,5) [red] {$A_2$};
\node at (7,5) [green!80!black] {$C_2$};
\path[<->] (-0.5,8) edge (-0.5,14);
\node at (-1.5,11) {$\scriptstyle N_2$};
\end{mosaicoPBC}
\end{minipage}
\vspace*{-4ex}
\begin{figure}[H]
\subfigure[First inflation]{\hspace*{1.4in}
\label{virtual1}
}
\hspace{1cm}
\subfigure[Second inflation]{\hspace*{2.2in} 
\label{virtual2}
 } 
 \subfigure[Third inflation]{
 \begin{mosaicoPBC}[scale=.3]
\Va{3}\Pl{12}\Br|{1}{12}\Pl{12}\Nl{12}
\Va{3}\Br-{9}{1}\Cr{2}{1}\Br-{1}{1}\Cr{1}{1}\Br-{10}{1}\Nl{1}
\Pl{12}\Br|{2}{12}\Pl{12}\Nl
\node at (14.8,16.3) [red] {$A_3$};
\path[->]  (14.5,15.5) edge[red] (14.5,13.55);
\node at (13,8.5) [green!80!black] {$C_3$};
\path[->]  (13,9.8) edge[green!80!black] (13,11.5);
\path[<->] (-0.5,13) edge (-0.5,25);
\node at (-1.5,20) {$\scriptstyle N_3$};
\end{mosaicoPBC}
\label{virtual3}
 } 
\caption{Virtual arms and crosses}
\label{virtual}
\vspace{-2ex}
\end{figure}
\end{figure}

\paragraph*{\em {\bf Subcase 2.2)} Positive or negative end points remain in the interior of a square of constant side.} 
Let us assume first that only negative end points remain in the interior of a square of constant side. In this case, since there are no virtual crosses, there are bigger and bigger crosses $C_n$ whose exit points on the right (resp. up) side coincide with the negative end points of the axes $a_n$ of the horizontal (resp. vertical) arms $A_n$. Thus, the decorations of the crosses $C_n$ must grow on the left 
(i.e. the negative direction of the axis of the horizontal ribbon) or downward (i.e. the negative direction of the axis of the vertical ribbon). Now, by joining the axis $a_n$ with the horizontal (resp. vertical) edge of the decoration of $C_n$, this subcase is reduced to the above one. A similar argument may be used when only positive end points remain in the  interior of a square of constant side. Finally, if both negative and positive end points remain in the interior of such a square, then there is a virtual arm (defined by a sequence of arms of bounded length) connecting bigger and bigger crosses $C_n$ and $C'_n$ on the left and right (resp. upward and downward) side. They must have $3$ exit points, and grow in the opposite directions. Reasoning on the left and right (resp. upward and downward) as in the previous cases, we come to the same conclusion. 
\medskip

\paragraph*{\em {\bf Case 3)} There is a single virtual cross.} Thus, there is a sequence of crosses $C_n$ whose centers of mass $c_n$ belong to a square of constant side. As before, we distinguish two subcases: 

\begin{figure}
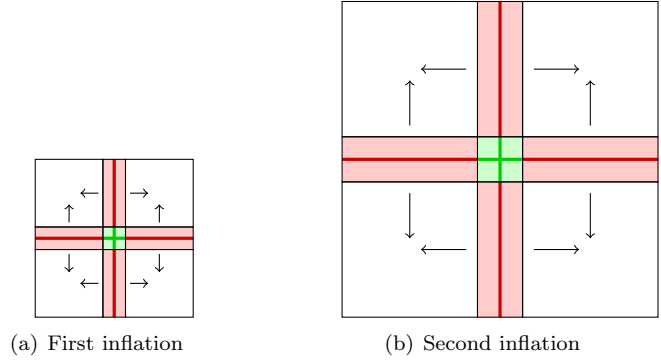

\subfigure[First inflation]{
\begin{mosaicoPBC}[scale=.3]
\Pl{3}\Br|{1}{3}\Pl{3}\Nl{3}
\Br-{3}{1}\Cr{1}{1}\Br-{3}{1}\Nl{1}
\Pl{3}\Br|{1}{3}\Pl{3}\Nl
\path[->] (2.8,1.5) edge (2,1.5);
\path[->] (2.8,5.5) edge (2,5.5);
\path[->] (4.2,1.5) edge (5,1.5);
\path[->] (4.2,5.5) edge (5,5.5);
\path[->] (1.5,2.8) edge (1.5,2);
\path[->] (5.5,2.8) edge (5.5,2);
\path[->] (1.5,4.2) edge (1.5,5);
\path[->] (5.5,4.2) edge (5.5,5);
\end{mosaicoPBC}
\label{vc4a1}
}
\hspace{1cm}
\subfigure[Second inflation]{
\begin{mosaicoPBC}[scale=.3]
\Pl{6}\Br|{2}{6}\Pl{6}\Nl{6}
\Br-{6}{2}\Cr{2}{2}\Br-{6}{2}\Nl{2}
\Pl{6}\Br|{2}{6}\Pl{6}\Nl
\path[->] (5.5,3) edge (3.5,3);
\path[->] (5.5,11) edge (3.5,11);
\path[->] (8.5,3) edge (10.5,3);
\path[->] (8.5,11) edge (10.5,11);
\path[->] (3,5.5) edge (3,3.5);
\path[->] (11,5.5) edge (11,3.5);
\path[->] (3,8.5) edge (3,10.5);
\path[->] (11,8.5) edge (11,10.5);
\end{mosaicoPBC}
\label{vc4a2}
 } 
\caption{Virtual cross with $4$ exit points}
\label{vc4a}
\end{figure}

\begin{figure}
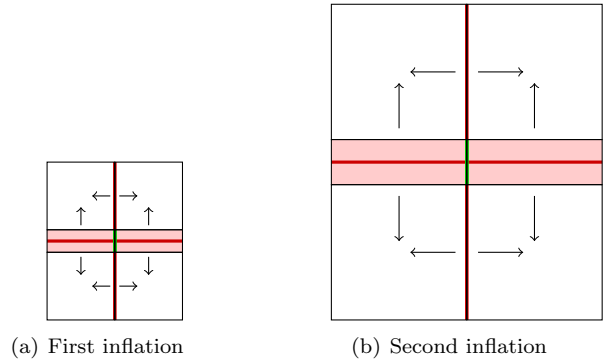

\subfigure[First inflation]{
\begin{mosaicoPBC}[scale=.3]
\Pl{3}\Br|{0}{3}\Pl{3}\Nl{3}
\Br-{3}{1}\Cr{0}{1}\Br-{3}{1}\Nl{1}
\Pl{3}\Br|{0}{3}\Pl{3}\Nl
\path[->] (2.8,1.5) edge (2,1.5);
\path[->] (2.8,5.5) edge (2,5.5);
\path[->] (3.2,1.5) edge (4,1.5);
\path[->] (3.2,5.5) edge (4,5.5);
\path[->] (1.5,2.8) edge (1.5,2);
\path[->] (4.5,2.8) edge (4.5,2);
\path[->] (1.5,4.2) edge (1.5,5); 
\path[->] (4.5,4.2) edge (4.5,5);
\draw[green,very thick] (3,3) -- (3,4);
\draw[red,very thick] (3,0) -- (3,3);
\draw[red,very thick] (3,4) -- (3,7);
\end{mosaicoPBC}
\label{vcd1}
}
\hspace{1cm}
\subfigure[Second inflation]{
\begin{mosaicoPBC}[scale=.3]
\Pl{6}\Br|{0}{6}\Pl{6}\Nl{6}
\Br-{6}{2}\Cr{0}{2}\Br-{6}{2}\Nl{2}
\Pl{6}\Br|{0}{6}\Pl{6}\Nl
\path[->] (5.5,3) edge (3.5,3);
\path[->] (5.5,11) edge (3.5,11);
\path[->] (6.5,3) edge (8.5,3);
\path[->] (6.5,11) edge (8.5,11);
\path[->] (3,5.5) edge (3,3.5);
\path[->] (9,5.5) edge (9,3.5);
\path[->] (3,8.5) edge (3,10.5);
\path[->] (9,8.5) edge (9,10.5);
\draw[green,very thick] (6,6) -- (6,8);
\draw[red,very thick] (6,0) -- (6,6);
\draw[red,very thick] (6,8) -- (6,14);
\end{mosaicoPBC}
\label{vcd2}
 } 
\caption{Virtual cross with degenerate arms}
\label{vcd}
\end{figure}

\paragraph*{\em {\bf Subcase 3.1)} The crosses have four exit points (including multiplicities).} Assume first that the crosses grow in the horizontal (on the left and right) and vertical (upward and downward) opposite directions, see Figure~\ref{vc4a}. Since the crosses cover the whole leaf $L$,  the continuous boundary $\partial_c \calr_\infty$ separates $L$ into $4$ connected components in the same way that 
decorations separate crosses. In general, the exit points located on the left and right sides of the crosses are positive and negative end points of the axes of horizontal arms pointing to the left and right. Similarly the exit points on the up and down sides are negative and positive end points of the axes of  vertical arms pointing upward and downward. Thus, if the crosses do not grow in all directions, there will be exit points on the right or left, up or down, remaining  in the interior of a square of constant side. But these points will be end points of the axes defining virtual arms pointing to the left or right, upward or downward. From the discussion of the previous case, it is clear that $\partial_c \calr_\infty$ still separates $L$ into $4$ connected components. On the other hand, we may have a degenerate virtual cross defined by a sequence of degenerate crosses of type $(a)$ or $(c)$ in Figure~\ref{deg}. In this case, there will always be $4$ virtual arms, while some will be degenerate, see Figure~\ref{vcd}.  In any event, as before, $\calr[\calt]$ separates into four $\calr_\infty$-equivalence classes. 
\medskip 

\paragraph*{\em {\bf Subcase 3.2)} The crosses have three exit points (including multiplicities).} This  situation is obviously very similar to the previous one, see Figure~\ref{vc3a}. Reasoning as before, we can see that $\partial_c \calr_\infty$ separates $L$ into $3$ connected components, and then $\calr[\calt]$ separates into three $\calr_\infty$-equivalence classes.
\medskip 

\begin{figure}
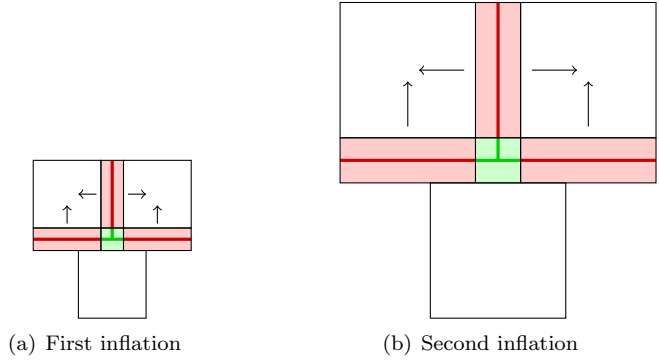

\subfigure[First inflation]{
\begin{mosaicoPBC}[scale=.3]
\Pl{3}\Br|{1}{3}\Pl{3}\Nl{3}
\Br-{3}{1}\Cr{1}{1}\Br-{3}{1}\Nl{1}
\Va{2}\Pl{3}\Nl
\path[->] (2.8,1.5) edge (2,1.5);
\path[->] (4.2,1.5) edge (5,1.5);
\path[->] (1.5,2.8) edge (1.5,2);
\path[->] (5.5,2.8) edge (5.5,2);
\end{mosaicoPBC}
\label{vc3a1}
}
\hspace{1cm}
\subfigure[Second inflation]{
\begin{mosaicoPBC}[scale=.3]
\Pl{6}\Br|{2}{6}\Pl{6}\Nl{6}
\Br-{6}{2}\Cr{2}{2}\Br-{6}{2}\Nl{2}
\Va{4}\Pl{6}\Nl
\path[->] (5.5,3) edge (3.5,3);
\path[->] (8.5,3) edge (10.5,3);
\path[->] (3,5.5) edge (3,3.5);
\path[->] (11,5.5) edge (11,3.5);
\end{mosaicoPBC}
\label{vc3a2}
 } 
\caption{Virtual cross with $3$ exit points}
\label{vc3a}
\end{figure}

\begin{figure}
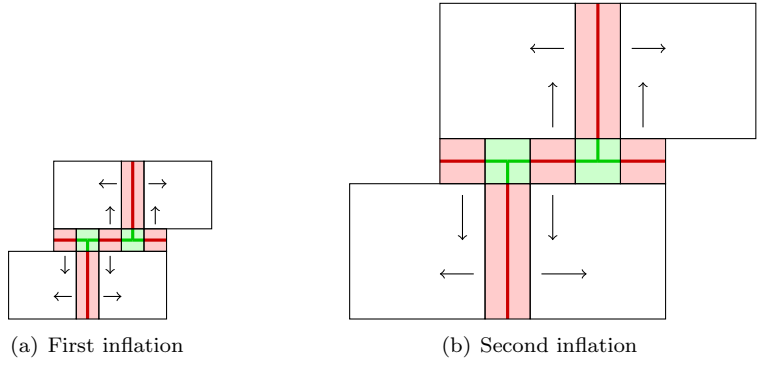

\subfigure[First inflation]{\begin{mosaicoPBC}[scale=.3]
\Va{2}\Pl{3}\Br|{1}{3}\Pl{3}\Nl{3}
\Va{2}\Br-{1}{1}\Cr{1}{1}\Br-{1}{1}\Cr{1}{1}\Br-{1}{1}\Nl{1}
\Pl{3}\Br|{1}{3}\Pl{3}\Nl
\path[->] (4.8,1) edge (4,1);
\path[->] (2.8,6) edge (2,6);
\path[->] (6.2,1) edge (7,1);
\path[->] (4.2,6) edge (5,6);
\path[->] (4.5,2.8) edge (4.5,2);
\path[->] (6.5,2.8) edge (6.5,2);
\path[->] (2.5,4.2) edge (2.5,5);
\path[->] (4.5,4.2) edge (4.5,5);
\end{mosaicoPBC}
\label{twoc1}
}
\hspace{1cm}
\subfigure[Second inflation]{\begin{mosaicoPBC}[scale=.3]
\Va{4}\Pl{6}\Br|{2}{6}\Pl{6}\Nl{6}
\Va{4}\Br-{2}{2}\Cr{2}{2}\Br-{2}{2}\Cr{2}{2}\Br-{2}{2}\Nl{2}
\Pl{6}\Br|{2}{6}\Pl{6}\Nl
\path[->] (9.5,2) edge (8,2);
\path[->] (5.5,12) edge (4,12);
\path[->] (12.5,2) edge (14,2);
\path[->] (8.5,12) edge (10.5,12);
\path[->] (9,5.5) edge (9,3.5);
\path[->] (13,5.5) edge (13,3.5);
\path[->] (5,8.5) edge (5,10.5);
\path[->] (9,8.5) edge (9,10.5);
\end{mosaicoPBC}
\label{twoc2}
 } 
\caption{Two virtual crosses}
\label{twovc}
\end{figure}

Before dealing with the last case, we would like to point out that there may be different types of configurations in successive inflation steps. 
But because $\ppb_{n+1}$ is inflated from $\ppb_n$, the number of $\calr_\infty$-classes in each $\calr$-class $\calr[\calt]$ is determined by the configuration with the fewest number of exit points.
\medskip

\paragraph*{\em {\bf Case 4)} There is more than one virtual cross.} Let us recall that the side of the plaques of the partial decomposition $\calb_n$ tend to $\infty$. So the only possibility is that there are two virtual crosses defined by sequences of crosses with $3$ exit points (which may be nondegenerate or degenerate of type $(b)$ or $(d)$ as in Figure~\ref{deg}) connected by horizontal (or vertical) arms of bounded length and whose other horizontal (resp. vertical) arms grow in opposite directions. Furthermore, their only vertical (resp. horizontal) arms must also grow pointing in opposite directions, see Figure~\ref{twovc}. In this case, $\partial_c \calr_\infty$ still separates $L$ into $4$ connected components, and then $\calr[\calt]$ separates into four $\calr_\infty$-equivalence classes.
\end{proof}
\medskip 

Let us resume the previous results in the following statement:

\begin{proposition} \label{resboundary}
Let  $\X$ be the continuous hull of an aperiodic and repetitive planar tiling
and $X$ the total transversal defined by the choice of base points in the prototiles. Let $\calr$ be the EER on $X$ induced by the TDS of $\X$, and 
$\calr_\infty$ the minimal open AF equivalence subrelation defined by Robinson inflation. Then the boundary $\partial \calr_\infty$ is a $\calr_\infty$-thin meager closed subset of $X$ such that every $\calr$-equivalence class represented by an element of $\partial \calr_\infty$  separates into at most four $\calr_\infty$-equivalence classes. \qed
 \end{proposition}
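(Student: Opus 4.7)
The plan is essentially to \emph{assemble} the three separate conclusions already established in this section, since Proposition~\ref{resboundary} is a recapitulation of the properties of the boundary constructed via Robinson inflation. First I would invoke the Sadun–Williams reduction (the theorem of \cite{SW} cited at the start of Section~\ref{buenainflacion}) in order to legitimately work with marked squares as prototiles; this is the setting in which the notions of arm, cross, axis, and virtual cross/arm used in the proof of Proposition~\ref{finiteness} have been defined. Then I would appeal to Theorem~\ref{Robinsoninflation} to produce the specific sequence of box decompositions $\ppb_n$ and the associated increasing sequence of CEERs $\calr_n$, setting $\calr_\infty = \varinjlim \calr_n$.

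Once $\calr_\infty$ is fixed, the three assertions of the proposition come directly from results already proved. The topological properties of $\partial \calr_\infty$, namely that it is closed and meager in $X$, are built into Definition~3.3(iii), since $\partial \calr_\infty = \bigcap_{n\in\N} \partial \calr_n$ is a countable intersection of clopen sets whose measure decreases to zero; meagerness can also be read off from the inclusion of $\partial \calr_\infty$ in the vertical boundary of arbitrarily fine box decompositions, combined with minimality from Proposition~\ref{minimal}. That $\partial \calr_\infty$ is $\calr_\infty$-thin is exactly Proposition~\ref{thin}, whose proof rests on the isoperimetric estimate Proposition~\ref{isoperimetric2}. Finally, the bound of four on the number of $\calr_\infty$-classes inside each $\calr$-class of a point of $\partial \calr_\infty$ is exactly Proposition~\ref{finiteness}, whose proof is the case analysis by virtual arms and virtual crosses.

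Hence the proof amounts to little more than a one-line citation of Propositions~\ref{minimal},~\ref{thin},~and~\ref{finiteness}, the definition of $\partial \calr_\infty$, and the Sadun–Williams reduction. There is no genuine obstacle here; the only mild care needed is to observe that all the preceding propositions of the section were stated for the Robinson-inflated decompositions $\ppb_n$, so that the hypothesis ``$\calr_\infty$ is defined by Robinson inflation'' in the statement of Proposition~\ref{resboundary} matches the setting in which each of the cited results was established.
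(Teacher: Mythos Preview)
Your proposal is correct and matches the paper's approach exactly: the paper introduces Proposition~\ref{resboundary} with the sentence ``Let us resume the previous results in the following statement'' and gives no proof beyond the \qed\ symbol, so it is purely a summary of Propositions~\ref{minimal},~\ref{thin}, and~\ref{finiteness} together with the definition of $\partial\calr_\infty$. One small over-step: you need not invoke the Sadun--Williams reduction here, since the hypothesis ``whose tiles are marked squares'' is already built into the statement of the proposition itself.
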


\section{Preparing the absorption}  \label{filtrating}

In this section, we shall prepare the boundary to apply absorption and to complete the proof of the Affability Theorem in the next section. 
Let us start by considering the natural filtration
\[
\partial  \calr_\infty  =   \bigcup_{i=2}^4  \partial_i  \calr_\infty  
 =   \bigcup_{i=2}^4  \{ \, \calt \in X \tq \text{$\calr[\calt]$ is union of $i$ $\calr_\infty$-equivalence classes} \,  \}
\]
deduced from Proposition~\ref{resboundary}. We know that the whole boundary $\partial  \calr_\infty$ is a $\calr_\infty$-thin closed subset of $X$, but it is not $\calr_\infty$-\'etale. Given a partial transformation $\varphi : A \to B$ of $\calr_\infty$ between clopen subsets $A$ and $B$ of $X$ sending $\calt_1 \in \partial  \calr_\infty$ to $\calt_2 = \varphi(\calt_1) \in \partial  \calr_\infty$ (so that its graph is a clopen bisection of $\calr_\infty$ passing through $(\calt_1,\calt_2) \in \calr_\infty |_{\partial  \calr_\infty}$), it may be that there was a point $\hat{\calt}_1 \in A \cap \partial  \calr_\infty$  such that $\hat{\calt}_2 = \varphi(\hat{\calt}_1) \notin \partial  \calr_\infty$ (and hence $(\hat{\calt}_1,\hat{\calt}_2) \notin \calr_\infty |_{\partial  \calr_\infty}$).
On the other hand, although $\partial_3  \calr_\infty$ and $\partial_4  \calr_\infty$ are not closed, we can construct fundamental domains 
 $\calh_3$ for $\calr |_{\partial_3  \calr_\infty}$ and $\calh_4$ for $\calr |_{\partial_4  \calr_\infty}$.
They are naturally equipped with CEERs which are transverse to the restrictions of $\calr_\infty$. Before we continue, let us clarify this definition: 

\begin{definition} 
A subset $A$ of $X$ is said to be a \emph{fundamental domain} for the equivalence relation induced on its $\calr$-saturation when $A$ intersects all the $\calr$-equivalence classes of this saturation in exactly one point. Such a subset is obviously $\calr$-\'etale. 
\end{definition} 

Unfortunately, $\partial_2  \calr_\infty$ is not closed and it is not easy to construct a fundamental domain for  $\calr |_{\partial_2  \calr_\infty}$. As has been said, the idea is to replace $\calr_\infty$ with an equivalence subrelation $\hat{\calr}_\infty$ by splitting the  $\calr_n$-classes into smaller pieces so that $\partial \calr_\infty$ becomes $\hat{\calr}_\infty$-thin and $\hat{\calr}_\infty$-\'etale. Before this, we shall construct the announced fundamental domains $\calh_3$ and $\calh_4$. 

\subsection{Constructing the fundamental domains $\calh_3$ and $\calh_4$}
For each $\calt \in \partial \calr_\infty$ and for each $n \in \N$, the equivalence class $\calr_n[\calt]$ is the intersection of $X$ with the $\calp''_n$-tile $\Pa''_n$ passing through the base point of $\calt$. From Proposition~\ref{isoperimetric2}, even if each $\calp''_n$-tile may have a very nasty boundary, it looks as a square on a large scale when $n \to \infty$. In Subsection~\ref{sSboundary}, we have denoted by $\Gamma^{(n)}$ the union of these boundaries, that is, $\Gamma^{(n)}$ is the union of the edges of the $\calp''_n$-tiling $\calt''_n$.  We shall distinguish the edges and the vertices of $\calt''_n$ (which define the total transversal $\check{X}^{(n)}$ described in Subsection~\ref{sSboundary}) 
from the edges and the vertices of the $n$-boundary $\Gamma^{(n)}$ (endowed with the graph structure derived from the original tiling $\calt$). 
The intersection of the total transversal $\check{X}$ with the graph $\Gamma^{(n)}$ give us the whole vertex set of $\Gamma^{(n)}$
whose degree function has been denoted 
$D_n : \check{X} \cap \Gamma^{(n)} \to \N$.
Let us now recall that  $\partial_c \calr_\infty$ is equipped with a treed equivalence relation induced by $\calf$ so that the class 
$\Gamma = \Gamma_\calt$ passing through $\calt$ coincides with the intersection of graphs $\Gamma^{(n)}$. According to  Proposition~\ref{finiteness}, this is a tree without terminal edges having $2$, $3$ or $4$ ends, see Figures~\ref{virtualboundaries23}~and~\ref{virtualboundaries4}.
\medskip

If we assume $\calt \in \cpartial_3 \calr_\infty$,  there is a virtual cross defined by a sequences of crosses $C_n$ with $3$ or $4$ arms for the intermediate tilings $\calt'_n$. For each $n \in \N$, there are $3$ or $4$ different  $\calp''_n$-tiles which meet in a neighborhood in $\Gamma^{(n)}$ of some fixed vertex $o = o_\Gamma$ in $C_n$. They form a $\calp''_n$-patch $\M_n(\Gamma)$.

\begin{definition}  We say that $o$ is the \emph{root} of $\Gamma$ and $\M_n(\Gamma)$ is a \emph{basic $\calp''_n$-patch around the root $o$}. We denote by $D(\M_n(\Gamma))$ the number of  $\calp''_n$-tiles of $\M_n(\Gamma)$. 
\end{definition} 

The decoration of $C_n$ and the axis of the corresponding arms $A_n$ form a rough model for $\Gamma$ in neighborhood of the root $o$, whereas the axis of the virtual arms provide rough models for the ends of $\Gamma$. 

\begin{figure}
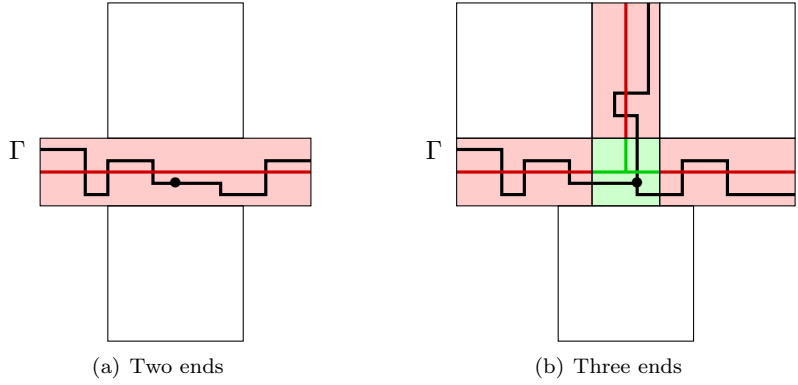

	\subfigure[Two ends]{
		\begin{mosaicoPBC}[scale=.3]
			\path[use as bounding box] (0,0) rectangle (12,15);
			
			\Va{3}\Pl{6}\Nl{6}
			\Br-{12}{3}\Nl{3}
			\Va{3}\Pl{6}

			\node at (-1,6.5) {$\Gamma$};
			\node at (6,8) {$\bullet$};
			
			\draw[very thick]
				(0,6.5) -- (2,6.5) -- (2,8.5) -- ( 3,8.5) -- ( 3,7) -- ( 5,7) --
				(5,8  ) -- (8,8  ) -- (8,8.5) -- (10,8.5) -- (10,7) -- (12,7);
		\end{mosaicoPBC}
		\label{vb2}
	}
	\hspace{1cm}
	\subfigure[Three ends]{
		\begin{mosaicoPBC}[scale=.15]
			\path[use as bounding box] (0,0) rectangle (30,30);
			
			\Pl{12}\Br|{6}{12}\Pl{12}\Nl{12}
			\Br-{12}{6}\Cr{6}{6}\Br-{12}{6}\Nl{6}
			\Va{9}\Pl{12}

			\node at (-2,13) {$\Gamma$};
			\node at (16,16) {$\bullet$};

			\draw[very thick]
				( 0,13) -- ( 4,13) -- ( 4,17) -- ( 6,17) -- ( 6,14) -- (10,14) --
				(10,16) -- (16,16) -- (16,17) -- (20,17) -- (20,14) -- (24,14) --
				(24,17) -- (30,17);
			\draw[very thick]
				(16,16) -- (16,10) -- (14,10) -- (14,8) -- (17,8) -- (17,0);
		\end{mosaicoPBC}
		\label{vb3}
	}
	\caption{Two and three ends}
	\label{virtualboundaries23}
\end{figure}

\begin{figure}
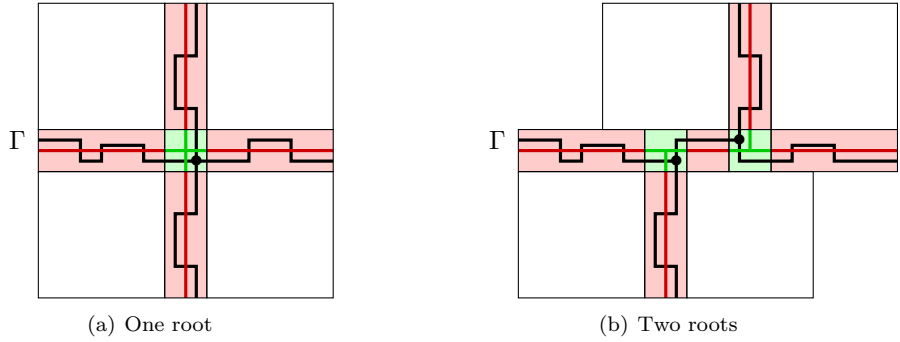

	\subfigure[One root]{
	\begin{mosaicoPBC}[scale=.14]
		\Pl{12}\Br|{4}{12}\Pl{12}\Nl{12}
		\Br-{12}{4}\Cr{4}{4}\Br-{12}{4}\Nl{4}
		\Pl{12}\Br|{4}{12}\Pl{12}\Nl

		\node at (-2,13) {$\Gamma$};
		\node at (15,15) {$\bullet$};

		\draw[very thick] (0,13) -- (4,13) -- (4,15) -- (6,15) -- (6,13.5) -- (10,13.5) -- (10,15) -- 
		(15,15) -- (20,15) -- (20,13) -- (24,13) -- (24,15) -- (28,15);
		
		\draw[very thick]  (15,15) -- (15,20) -- (13,20) -- (13,25) -- (15,25) -- (15,28);

		\draw[very thick]  (15,15) -- (15,10) -- (13,10) -- (13,5) -- (15,5) -- (15,0);
	\end{mosaicoPBC}
	\label{vb4}
	}
	\hspace{1cm}
	\subfigure[Two roots ]{	 
	\begin{mosaicoPBC}[scale=.14]
		\Va{8}\Pl{12}\Br|{4}{12}\Pl{12}\Nl{12}
		\Br-{12}{4}\Cr{4}{4}\Br-{4}{4}\Cr{4}{4}\Br-{12}{4}\Nl{4}
		\Pl{12}\Br|{4}{12}\Pl{12}\Nl

		\node at (-2,13) {$\Gamma$};
		\node at (15,15) {$\bullet$};
		\node at (21,13) {$\bullet$};
		\draw[very thick] (0,13) -- (4,13) -- (4,15) -- (6,15) -- (6,13.5) -- (10,13.5) -- (10,15) -- (15,15) -- (15,13) -- (21,13) -- (21,15) -- (26,15) -- (26,13.5) -- (30,13.5) -- (30,15) -- (36,15);
		\draw[very thick]  (15,15) -- (15,20) -- (13,20) -- (13,25) -- (15,25) -- (15,28);
		\draw[very thick]  (21,13) -- (21,10) -- (23,10) -- (23,5) -- (21,5) -- (21,0);
	\end{mosaicoPBC}
	\label{vb4k}
	}
	\caption{Four ends}
	\label{virtualboundaries4}
\end{figure}

\begin{proposition}
	\label{H3}
	The natural EER induced on the $\calr$-saturation of $\cpartial_3  \calr_\infty$ admits 
	a fundamental domain $\check{\calh}_3$ which is the union of  countably many disjoint
	closed subsets $\check{\calh}_{3,m}$. 
\end{proposition}

\begin{proof}
	Given $\calt \in \cpartial_3  \calr_\infty$, let $\Gamma = \Gamma_\calt$ be the trace 
	 of the continuous boundary $\partial_c \calr_\infty$ on the leaf $L = L_\calt$. According to the 
	 previous discussion (see again the proof of Proposition~\ref{finiteness}), we know that 
	 $\M_n(\Gamma)$  grows in at least  three different directions, see Figure~\ref{vb3}.  But notice that 
	 we may also find a basic patch of  this kind in a leaf of the saturation of 
	 $\calt \in \cpartial_4  \calr_\infty$ having exactly two virtual crosses. 
	 In this case, $\Gamma$ has two roots $o$ and $o'$ of degree $D(o)=D(o')=3$, see Figure~\ref{vb4k}. 
	 If we assume that $\M_n(\Gamma)$ contains a ball of radius $n$ centered at $o$ 
	 and if $n$ is larger than the distance $\ell$ between $o$ and $o'$, we need $4$ different 
	 $\calp''_n$-tiles to cover the ball. In other words, for a large enough $n$, the basic 
	$\calp''_n$-patches around $o$ and $o'$ have nontrivial intersection, and we 
	can replace each of the original basic $\calp''_n$-patches (made up of $3$ different 
	$\calp''_n$-tiles) with its union (made up of $4$ different 
	$\calp''_n$-tiles). We still denote by $\M_n(\Gamma)$ the new basic 
	$\calp''_n$-patch around $o$ and $o'$. Thus $D(\M_n(\Gamma)) = 4$ is different from the degrees $D(o)=D(o')=3$.
	Now, since $o$ belongs to $\cpartial_3  \calr_\infty$, there is a minimal  integer
	 $m(\Gamma) \geq 0$ such that $D(\M_n(\Gamma)) = 3 = D(o)$ for all $n \geq m(\Gamma)$. 
	  \medskip

	 Let $X_n(\Gamma)$ be the set of tilings in $\check{X}$ containing the patch $\M_n(\Gamma)$ 
	 (which  cover the ball of radius $n$) around the origin. This is a clopen subset of $\check{X}$. 
	 Let us note that  each clopen subset $X_n(\Gamma) \cap \cpartial \calr_\infty$ is a fundamental 
	 domain for the equivalence relation induced on  $\partial_c \calr_\infty$. 
	But $X_n(\Gamma)$ is never a fundamental domain for  the EER induced on the $\calr$-saturation of 
	$\cpartial_3 \calr_\infty$. However, the intersection
	$$
	\check{\mathcal{H}}_{3,\Gamma} = \bigcap_{n \geq m(\Gamma)} X_n(\Gamma)
	$$ 
	is a closed subset of $\cpartial_3  \calr_\infty$ which meets all the $\calr$-equivalence classes 
	in the $\calr$-saturation of $\cpartial_3  \calr_\infty$ at most one time, although its $\calr$-saturation 
	may be smaller than that of $\cpartial_3  \calr_\infty$. Unfortunately, there are uncountably many finite 
	labeled trees $\Gamma$ with $3$ ends. However, we can consider the closed set
	$$
	 \check{\mathcal{H}}_{3,\Gamma_{m}}= 
	 \bigcap_{n \geq m} \, \bigcup_{\Gamma'_m = \Gamma_m} X_n(\Gamma')
	 $$ 
	where $\Gamma'$ represents any tree in $\partial_c \calr_\infty$ such that 
	$\M_m(\Gamma') =  \M_m(\Gamma)$ and therefore 
	$\Gamma'_m= \Gamma' \cap \M_m(\Gamma') = \Gamma \cap \M_m(\Gamma) = \Gamma_m$. 
	For each $m \in \N$, there are only finitely many trees $\Gamma_m$, and then the union 
	$\check{\mathcal{H}}_{3,m}$ of the closed sets $\check{\mathcal{H}}_{3,\Gamma_{m}}$ still is closed. The union 
	$$
	\check{\calh}_3 = \bigcup_{m \in \N} \check{\calh}_{3,m}
	$$
	 is fundamental domain which meets each leaf of the saturation of $\cpartial_3 \calr_\infty$ in a 
	 unique point: the root $o$ of the boundary $\Gamma$. 
	 \end{proof}


The construction of a fundamental domain for the $\calr$-saturation of $\cpartial_4  \calr_\infty$ should be a little different since many leaves admit two roots. Only in the case when the leaves have a unique real root of degree $4$, see Figure~\ref{vb4}, we can conclude as in the previous case. The rest of four-divided leaves (with two roots of degree $3$ as in Figure~\ref{vb4k}) should be treated in another way.

\begin{proposition} \label{H40}
There is a closed subset $\check{\calh}_{4,0} \subset \cpartial_4  \calr_\infty$ of the continuous boundary $\cpartial  \calr_\infty$ intersecting all $\calr$-equivalence classes in at most one point: the unique vertex of degree $4$ of the corresponding tree in $\partial_c  \calr_\infty$. \qed
\end{proposition}


In general, for each tiling $\calt \in \cpartial_4  \calr_\infty - \check{\calh}_{4,0}$, we have pairs of crosses $C_n$ and $C_n^\prime$ with $3$ exit points defining two virtual crosses connected by a virtual arm of bounded length. Let us assume that these crosses are of the type described in Figure~\ref{vb4k}, that is, the corresponding arms are included in the union of a horizontal ribbon and two vertical semi-ribbons pointing upward and downward. Like for tilings in $\cpartial_3  \calr_\infty$, the union of the decoration of $C_n$ and $C_n^\prime$ and the axis of the corresponding arms $A_n$ and $A'_n$ forms a rough model for the tree $\Gamma = \Gamma_\calt$ passing through $\calt$ in a neighborhood of the two roots $o$ and $o'$, see Figure~\ref{vb4k}. In this case, the axes of the vertical semi-ribbons meet the horizontal axis in two different points, making up a rough  model for $\Gamma$. We define $\check{\calh}_{4,\ell}$ as the set of tilings $\calt \in \cpartial_4  \calr_\infty$ with origin in a $o$ such that the distance to the other root  $o'$ is equal to a nonnegative integer $\ell$.

\begin{proposition} \label{H4k}
	The natural EER induced on the $\calr$-saturation of $\cpartial_4  \calr_\infty$ admits 
	a fundamental domain $\check{\calh}_4$ which is the union of  countably many disjoint
	closed subsets $\check{\calh}_{4,\ell}$. 
\end{proposition}

\begin{proof} 
	We  start by  fixing a positive integer $\ell \geq 1$, and considering a tiling  
	$\calt \in \cpartial_4  \calr_\infty$ such that there is a geodesic path $\gamma$ of length $\ell$ joining 
	$o$ and $o'$. If $n \in \N$ is large enough, 
	there are $4$ different $\calp''_n$-tiles that meet together in a neighborhood of $\gamma$ in $\Gamma$. 
	As in the proof of Lemma~\ref{H3}, they form a basic $\calp''_n$-patch
	around any vertex of $\gamma$ which coincides with $\M_n(\Gamma)$. 
	 We denote by $X_n(\Gamma)$ the set of tilings in $\check{X}$ having this patch around the 
	 origin (that becomes one of the vertices of $\gamma$). In this case, since all the points of 
	 $\gamma$ belong to $\cpartial_4  \calr_\infty$, there is a  minimal integer $m(\gamma) \geq 0$ 
	 such that $D(\M_n(\Gamma)) = 4$ for all  $n \geq m(\gamma)$ whereas $D(o)=D(o') = 3$. 
	 Therefore, if $\Gamma'$ represents any tree 
	 in $\partial_c \calr_\infty$ such that $\M_m(\Gamma') = \M_m(\Gamma)$, then 
	$$
	\check{\mathcal{G}}_{4,\gamma} = \bigcap_{n \geq m}  \, 
	\bigcup_{\Gamma'_m= \Gamma_m} X_n(\Gamma')
	 $$ 
	is a closed subset of $\cpartial_4  \calr_\infty$ which meets all the $\calr$-equivalence classes of 
	the $\calr$-saturation of $\cpartial_4  \calr_\infty$ in at most $\ell+1$ points. Since there is a 
	finite number of paths of length $\leq \ell$ starting from the origin, the union 
	$\check{\mathcal{G}}_{4,\ell}$ of all these closed 
	subsets $\check{\mathcal{G}}_{4,\gamma}$ of $\cpartial_4  \calr_\infty$ still is closed. Moreover, each closed subset 
	$\check{\mathcal{G}}_{4,\gamma}$ split into $\ell +1$ closed subsets intersecting all 
	$\calr$-equivalence classes in at most one point. We denote by 
	$\check{\calh}_{4,\gamma}$ any of these closed sets. As before, their union $\check{\calh}_{4,\ell}$ 
	still has the same property. Then 
	$$
	\check{\calh}_4 = \bigcup_{\ell \geq 0} \check{\calh}_{4,\ell}
	$$
	is a fundamental domain for the $\calr$-saturation of $\cpartial_4  \calr_\infty$. 
	\end{proof}


Although Propositions~\ref{H3}~and~\ref{H4k} has been proved in a similar way, there is an important difference between $\check{\calh}_3$ 
and $\check{\calh}_{4,\ell}$: 

\begin{proposition} \label{H3closed}
The fundamental domain $\check{\calh}_3$ is a closed subset of $\check{X}$.
\end{proposition}

\begin{proof} Assume that $\calt_n$ is a sequence of tilings belonging to $\check{\calh}_3$ that converges to a tiling $\calt$. According to  Proposition~\ref{semi-continuous}, the set $\{ \calt \in \partial \calr_\infty | D(\calt) \geq 3 \} \supset \check{\calh}_3$ is a closed subset of 
$\cpartial \calr_\infty \subset \check{X}$. Thus, the origin of $\calt$ is placed at a root of degree $3$ or $4$. In the first case, if $\calt \notin \check{\calh}_3$, 
there is a $\calt$-patch (consisting of four $\calp''_n$-tiles) containing a second root of the boundary $\Gamma = \Gamma_\calt$. For a large enough $n$, there are $\calt_n$-patches (consisting of four $\calp''_n$-plaques in the same flow boxes in $\ppb_n$) which determine two different roots of 
the boundaries $\Gamma_{\calt_n}$ of $\calt_n$. But this contradicts the fact that $\calt_n \in \check{\calh}_3$. In the second case, if 
$D(\calt) = 4$, we can argue similarly to conclude that $D(\calt_n) = 4$ for a large enough $n$, contradicting again the fact that $\calt_n \in \check{\calh}_3$. 
\end{proof} 

\begin{remark} \label{remarkfd}
Going back to the total transversal $X$, we can replace $\check{\calh}_3$ and $\check{\calh}_{4,\ell}$ by $3$ or $4$ disjoint closed fundamental domains for $\calr$. We shall denote by $\calh_3$ and $\calh_{4,\ell}$ any of these closed fundamental domains.
\end{remark} 

\subsection{Making a patchwork} As we have already said, it is not clear how a fundamental domain for $\partial_2  \calr_\infty$ can be constructed. Now, we shall replace $\calr_n$ with an equivalence subrelation $\hat{\calr}_n$ obtained using the following procedure. 
\medskip 

Firstly, we color each $\calt''_n$-edge of the tile $\Pa''_n$ and consequently all the $\calp''_{n-1}$-tiles touching $\partial \Pa''_n$. Thus, a well-defined color $c \in \calc = \{ 1, \dots , 8 \}$ is associated with any $\calp''_{n-1}$-tile touching only the interior of the $\calt''_n$-edges in $\Pa''_n$, while two colors are necessary to encode each $\calp''_{n-1}$-tile incident to some $\calt''_n$-vertex in $\Pa''_n$. Secondly, we split the complementary of their union into the same number of $\calp''_{n-1}$-patches $\leq 8$, and finally we color arbitrarily these $\calp''_{n-1}$-patches with the same number of colors, see Figure~\ref{calr'n.a}. Now, if the origin of the tiling $\calt \in X$ belongs to $\Pa''_n$, the class $\hat{\calr}_n[\calt]$ is defined as the intersection of $X$ with each $\calp''_{n-1}$-tile having the same color that the $\calp$-tile containing the origin. 
\medskip 

Note, however, that the base point of each $\calp$-tile incident to some $\calt''_n$-vertex must be doubled by attaching two color codes at each stage of the inflation. In some cases, namely when there are two $\calp$-tiles included in the same $\calp''_n$-tile, the color of each incident $\calt''_n$-edge determine a single color code for each base point, see Figure~\ref{calr'n.b}. In other word, we can assume that the two color codes are equal, so the corresponding colored base points are identified by $\hat{\calr}_n$. Otherwise, when there is a unique $\calp$-tile included in the $\calp''_n$-tile, each of two color codes attached to the base point is determined by the color of each $\calt''_n$-edge incident with the $\calt''_n$-vertex.  In other words, if we consider the $n$-boundary $\Gamma^{(n)} \subset \check{X}^{(n)} \subset \check{X}$, each of four base points in $X^{(n)} \subset X$ associated with a vertex of $\calt''_n$ is doubled by attaching two color codes in $\calc$, which may be different or not, while each of four base points associated with the other vertices of $\Gamma^{(n)}$ has a well-defined color. 
\medskip 

\begin{figure}
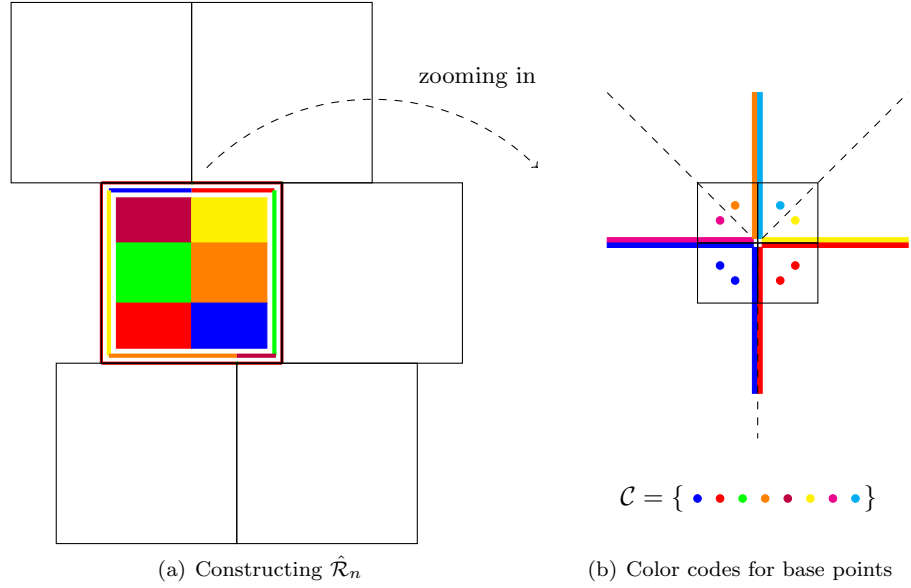

	\subfigure[Constructing $\hat{\calr}_n$]{
\begin{mosaicoPBC}[scale=.2]
\Pl{12}\Pl{12}\Nl{12}
\Va{6}\Br-{6}{0}\Cr{0}{0}\Br-{6}{0}\Cr{0}{0}\Nl{0}
\Va{6}\Br|{0}{12}\Pl{12}\Br|{0}{12}\Pl{12}\Nl{12}
\Va{6}\Br-{9}{0}\Cr{0}{0}\Br-{3}{0}\Nl{0}
\Va{3}\Pl{12}\Pl{12}
\draw[blue,ultra thick] (6.5,12.5) -- (12,12.5);
\draw[red,ultra thick] (12,12.5) -- (17.5,12.5);
\draw[yellow,ultra thick] (6.5,12.5) -- (6.5,23.5);
\draw[green,ultra thick] (17.5,12.5) -- (17.5,23.5);
\draw[orange,ultra thick] (6.5,23.5) -- (15,23.5);
\draw[purple,ultra thick] (15,23.5) -- (17.6,23.5);
\draw [fill=purple,purple] (7,16) rectangle (12,13);
\draw [fill=yellow, yellow] (12,16) rectangle (17,13);
\draw [fill=green,green] (7,20) rectangle (12,16);
\draw [fill=orange, orange] (12,20) rectangle (17,16);
\draw [fill=red,red] (7,23) rectangle (12,20);
\draw [fill=blue, blue] (12,23) rectangle (17,20);
\draw [dashed,->] (13,11)  to[out=-45, in=-135] (35,11);
\node at (31,5){\small zooming in};
\end{mosaicoPBC}
\label{calr'n.a}
}
	\subfigure[Color codes for base points]{
\begin{mosaicoPBC}[scale=.2]
\Va{10}\Nl{6}
\Va{6}\Pl{4}\Pl{4}\Nl{4}
\Va{6}\Pl{4}\Pl{4}\Nl{4}
\draw[transparent] (0,0) -- (20,30);
\node at (3,27) {$\calc = \{$};
\draw[blue,fill=blue] (6,27) circle (0.25);
\draw[red,fill=red] (7.5,27) circle (0.25);
\draw[green,fill=green] (9,27) circle (0.25);
\draw[orange,fill=orange] (10.5,27) circle (0.25);
\draw[purple,fill=purple] (12,27) circle (0.25);
\draw[yellow,fill=yellow] (13.5,27) circle (0.25);
\draw[magenta,fill=magenta] (15,27) circle (0.25);
\draw[cyan,fill=cyan] (16.5,27) circle (0.25);
\node at (17.5,27) {$\}$};
\draw[blue,fill=blue] (7.5,11.5) circle (0.25);
\draw[blue,fill=blue] (8.5,12.5) circle (0.25);
\draw[red,fill=red] (12.5,11.5) circle (0.25);
\draw[red,fill=red] (11.5,12.5) circle (0.25);
\draw[magenta,fill=magenta] (7.5,8.5) circle (0.25);
\draw[orange,fill=orange] (8.5,7.5) circle (0.25);
\draw[cyan,fill=cyan] (11.5,7.5) circle (0.25);
\draw[yellow,fill=yellow] (12.5,8.5) circle (0.25);
\draw[magenta,fill=magenta] (0,9.65) rectangle (9.7,10);
\draw[orange,fill=orange] (9.65,0) rectangle (10,9.7);
\draw[cyan,fill=cyan] (10,0) rectangle (10.3,9.7);
\draw[yellow,fill=yellow] (10.3,9.65) rectangle (20,10);
\draw[blue, fill=blue] (0,10) rectangle (9.7,10.3);
\draw[blue, fill=blue] (9.65,10.3) rectangle (10,20);
\draw[red,fill=red] (10,10.3) rectangle (10.3,20);
\draw[red,fill=red] (10.3,10) rectangle (20,10.3);
\draw [dashed] (0,0)  -- (9.7,9.7);
\draw [dashed] (10.3,9.7)  -- (20,0);
\draw [dashed] (10,10)  -- (10,23);

\end{mosaicoPBC}
\label{calr'n.b}
}
\caption{Constructing $\hat{\calr}_n$ and coloring $\calp$-tiles}
\label{calr'n}
\end{figure}

In order to assure that $\hat{\calr}_n$ is a CEER, it is enough to consider the box decomposition $\ppb_n$ inflated from $\ppb_{n-1}$ provided by Theorem~\ref{Robinsoninflation}, and coloring in the same way all the plaques  $\Pa''_n \times \{ \ast \} $ contained in the same flow box 
$\B''_n \cong \Pa''_n \times X''_n$. 
Now, assuming that each colored $\calp''_{n-1}$-patch contains a ball of the same radius that goes to infinity as $n \to +\infty$, we have immediately the following version of Proposition~\ref{minimal}:

\begin{proposition}
	\label{minimal'}
	The inductive limit $\hat{\calr}_\infty= \varinjlim \hat{\calr}_n$ is a minimal open
	AF equivalence subrelation of  $\calr_\infty$ whose boundary $\partial \hat{\calr}_\infty \supset \partial \calr_\infty$. \qed
\end{proposition}

As $X$ and $X^{(n)}$, the boundary $\partial \calr_\infty$ becomes bigger than the original one. If $\calt \in \partial _3 \calr_\infty \cup \partial_4 \calr_\infty$, each of four base points in $\partial \calr_\infty$ associated with the roots (one or two) of $\Gamma = \Gamma_\calt$ is doubled and encoded with two color codes in $\calc$ at each stage of the inflation. If the root $o$ has degree $D(o) = 3$, there are exactly two pairs of colored base points whose color codes coincide from a certain inflation stage (so the associated colored base points are $\hat{\calr}_\infty$-related), but it is not possible when $D(0)=4$, see again Figure~\ref{calr'n.b}. Now, we can prove the following fundamental result: 

\begin{proposition} \label{R'etale}
The boundary $\partial \calr_\infty$ is $\hat{\calr}_\infty$-\'etale. 
\end{proposition} 

\begin{proof} Assume that $(\calt_1,\calt_2) \in \hat{\calr}_\infty$ with $\calt_1,\calt_2 \in \partial  \calr_\infty$. We need to construct a partial transformation 
$\varphi : A \to B$ of $\hat{\calr}_\infty$ between open neighborhoods $A$ of $\calt_1$  and $B$ of $\calt_2$ in $X$ such that 
$\calt_2 = \varphi(\calt_1)$ and $\hat{\calt}_1 \in A \cap \partial \calr_\infty$ if and only if $\hat{\calt}_2  = \varphi(\hat{\calt}_1) \in B \cap \partial \calr_\infty$. By definition, there is $n \in \N$ such that  $(\calt_1,\calt_2) \in \hat{\calr}_n$. It follows that $\calt_1$ and $\calt_2$ belong to a single plaque of a multicolored flow box $\B''_n$. This means that its origins belong to the same $\calp''_n$-tile. 
Since $\calt_1,\calt_2 \in \partial  \calr_\infty$, these points belong to two $\calp''_{n-1}$-tiles that meet $\partial \Pa''_n$ along the same colored $\calt''_n$-edge. By triviality of $\B''_n \cong \Pa''_n \times X''_n$, we have a partial transformation $\varphi : A \to B$ of $\calr$ between two copies $A$ and $B$ of $X''_n$ passing through $\calt_1$ and $\calt_2$ respectively such that $\calt_2 = \varphi(\calt_1)$. By construction, since the base points determined by 
$\calt_1$ and $\calt_2$ have the same color code, the graph of $\varphi$ is an open subset of the graph of $\hat{\calr}_n$ and therefore of the graph of $\hat{\calr}_\infty$. Finally, if a tiling $\hat{\calt}_1 \in A \cap \partial \calr_\infty$, then the tiling $\hat{\calt}_2  = \varphi(\hat{\calt}_1) \in B \cap \partial \calr_\infty$ because they are in the same multicolored plaque in $\B''_n$ and the $\calp''_{n-1}$-tiles containing them must meet the same colored 
$\calt''_n$-edge. Indeed, if this $\calt''_n$-edge belong to $\partial \calr_\infty$, then  $\hat{\calt}_1$ and $\hat{\calt}_2$ are  simultaneously  in $ \partial \calr_\infty$, while none of these tilings are in $\partial \calr_\infty$ if the $\calt''_n$-edge does not belong to $\partial \calr_\infty$.
\end{proof}


\begin{proposition} \label{R'thin}
The boundary $\partial \calr_\infty$ is $\hat{\calr}_\infty$-thin. 
\end{proposition}

\begin{proof} The proof reduces to adapt the proof of Proposition~\ref{thin}. Firstly, notice that any discrete flow box $B''_n \cong P''_n \times X''_n$ defined from an element of $\ppb_n$ split into a finite number $\leq 8$ of flow boxes $B''_n(c) \cong P''_n (c) \times X''_n$ whose plaques have the same color encoded by $c \in \calc = \{ 1, \dots , 8 \}$. So they are $\hat{\calr}_n$-classes contained in one single $\calr_n$-class. For any $\hat{\calr}_\infty$-invariant probability measure $\mu$ on $X$, we can similarly argue to obtain:
\begin{align*}
		\mu(\partial \calr_n)  = \sum_{\B''_n \in \ppb_n} \mu(\partial_v B''_n) 
		& = \sum_{\B''_n \in \ppb_n} \sum_{c \in \calc}  \card \partial P''_n(c) \mu(X''_n) \\
		& = \sum_{\B''_n \in \ppb_n} \sum_{c \in \calc}  \frac{\card \partial P''_n(c)}{\card P''_n(c)} \mu(B''_n) \\
		& \leq  \max_{\Pa''_n \in \calp''_n}  \Big\{ \frac{\card \partial P''_n(c)}{\card P''_n(c)} \Big\} 
			\sum_{\B''_n \in \ppb_n} \mu(B''_n)  \\
		& =  \max_{\Pa''_n \in \calp''_n} \Big\{\frac{\card \partial P''_n(c)}{\card P''_n(c)} \Big\} \, \mu(X) 
			\leq 8  \max_{\Pa''_n \in \calp''_n} \Big\{\frac{\card \partial P''_n}{\card P''_n(c)} \Big\} 
	\end{align*}
where $\card P''_n(c)$ and $\card \partial P''_n(c)$ are the number of elements of the discrete colored piece $P''_n(c)$ and its boundary 
$\partial P''_n(c)$. Since $\card P''_n(c)$ and $\card P''_n$ have the same growth type, the isoperimetric ratio $\frac{\card \partial P''_n}{\card P''_n(c)}$ still converges to $0$ and hence $\partial \calr_\infty$ is $\hat{\calr}_\infty$-thin. 
\end{proof}

\section{Absorbing the boundary} \label{absorbing}

The aim of this last section is to prove Theorem~\ref{affabilitytheorem}. We start by constructing a CEER $\calk_2$ on $Y_2 = \partial \calr_\infty$ that is transverse to $\hat{\calr}_\infty \! \mid_{Y_2}$. By applying the Absorption Theorem of \cite{GMPS2}, see Theorem~\ref{thabsorption}, we shall obtain that $\hat{\calr}_\infty \vee \calk_2$ is a minimal AF equivalence relation OE to $\hat{\calr}_\infty$. 
In the next step, we construct a new CEER $\calk_3$ that is generated by two compact \'etale equivalence subrelations which are transverse to $\hat{\calr}_\infty \vee \calk_2$ in restriction to some \'etale and thin closed subset $Y_3$ of $Y_2$, and hence 
$\hat{\calr} _\infty \vee \calk_2 \vee \calk_3$ is another minimal AF equivalence relation OE to $\hat{\calr}_\infty$. In the last step, we shall complete the proof of Theorem~\ref{affabilitytheorem} by constructing a new CEER $\calk_4$, but now $\calk_4$ is generated by the union of two compact \'etale equivalence subrelations which are transverse to $\hat{\calr}_\infty \vee \calk_2 \vee \calk_3$ in restriction to the union $Y_4$ of an increasing sequence of \'etale and thin closed subsets of $Y_3$. We shall deduce that 
$$
\calr = \hat{\calr} _\infty \vee \calk_2 \vee \calk_3 \vee \calk_4
$$
is affable. 

\subsection{Constructing $\calk_2$} According to Proposition~\ref{R'etale}, the boundary $\partial \calr_\infty$ is $\hat{\calr}_\infty$-\'etale. 
For each pair $(\calt_1,\calt_2) \in \hat{\calr}_\infty$ with $\calt_1,\calt_2 \in \partial  \calr_\infty$, we have constructed a partial transformation $\varphi : A \to B$ of $\hat{\calr}_\infty$ such that $\calt_2 = \varphi(\calt_1)$ and $\hat{\calt}_1 \in A \cap \partial \calr_\infty$ if and only if $\hat{\calt}_2  = \varphi(\hat{\calt}_1) \in B \cap \partial \calr_\infty$. By definition, $\calt_1$ and $\calt_2$ belong to $\ppb_{n-1}$-plaques in the same multicolored $\ppb_n$-plaque $\Pa''_n$ that meet $\partial \Pa''_n$ along the same colored edge.
Then $A$ and $B$ are local transversals for $\ppb_{n-1}$ that meet these $\ppb_{n-1}$-plaques. On the other hand,  given $\calt_1 \in \partial  \calr_\infty$, there is a unique $\calt^\pitchfork_1 \in \partial \calr_\infty$ such that $\calt_1$ and $\calt^\pitchfork_1$ belong to $\ppb_{n-1}$-plaques contained in two different $\ppb_n$-plaques, so 
$(\calt_1,\calt^\pitchfork_1) \notin \calr_\infty$. To each of these $\ppb_{n-1}$-plaques, we have associated a well-defined color (not necessary equal) even if they are incident to some vertex at some inflation stage, 
see Figure~\ref{calr'n}. Respecting these colors and arguing as in the proof of Proposition~\ref{R'etale}, we construct a partial transformation $\varphi^\pitchfork: A^\pitchfork \to B^\pitchfork$ of $\calr$ such that $\calt^\pitchfork_1 = \varphi^\pitchfork(\calt_1)$ and  $\hat{\calt}_1 \in A^\pitchfork \cap \partial \calr_\infty$ if and only if $\hat{\calt}^\pitchfork_1  = \varphi^\pitchfork(\hat{\calt}_1) \in B^\pitchfork \cap \partial \calr_\infty$. Here $A^\pitchfork$ and $B^\pitchfork$ are clopen subsets of two local transversals relative to the box decomposition $\ppb_n$.

\begin{definition}
We say that $\calt_1$ is {\em $\calk_2$-equivalent} to $\calt^\pitchfork_1$, and so we have a finite equivalence relation $\calk_2$ on 
$\partial  \calr_\infty$. 
\end{definition} 

\begin{proposition} \label{K_2}
The equivalence relation $\calk_2$ is a CEER transverse to $\hat{\calr}_\infty \! \mid_{\partial \calr_\infty}$.
\end{proposition} 

\begin{proof} By construction, for each pair $(\calt_1,\calt^\pitchfork_1) \in \calk_2$, the graph of the local transformation 
$\varphi^\pitchfork |_{\partial \calr_\infty}: A^\pitchfork  \cap \partial \calr_\infty \to B^\pitchfork \cap \partial \calr_\infty$
becomes a bisection of $\calk_2$ containing $(\hat{\calt}_1,\hat{\calt}^\pitchfork_1 ) = (\hat{\calt}_1,\varphi^\pitchfork(\hat{\calt}_1))$ for all 
$\hat{\calt}_1 \in \hat{A} \cap \partial \calr_\infty$. Then $\calk_2$ is an \'etale equivalence relation on $\partial \calr_\infty$. Now, since 
$\partial \calr_\infty$ is compact and every $\calk_2$-class has two points, $\calk_2$ is also compact. To conclude, we show that $\calk_2$ is transverse to 
$\hat{\calr}_\infty \! \mid_{\partial \calr_\infty}$. It is clear that the intersection of $\hat{\calr}_\infty \! \mid_{\partial \calr_\infty}$ and $\calk_2 $ is reduced to the diagonal set $\Delta_{\partial \calr_\infty}$.
Moreover, for each element $((\calt_1,\calt_2),(\calt_2,\calt^\pitchfork_2)) \in \hat{\calr}_\infty \! \mid_{\partial \calr_\infty} \!\! \ast \, \calk_2$, there are local transformations $\varphi : A \cap \partial \calr_\infty \to B \cap \partial \calr_\infty$ of  $\hat{\calr}_\infty \! \mid_{\partial \calr_\infty}$
and  $\varphi^\pitchfork : A^\pitchfork \cap \partial \calr_\infty \to B^\pitchfork \cap \partial \calr_\infty$ of $\calk_2$ such that $\calt_2 = \varphi(\calt_1)$ and 
$\calt^\pitchfork_2 = \varphi^\pitchfork(\calt_2)$ are well-defined. We can assume $A^\pitchfork =A$. By denoting $\calt^\pitchfork_1 = \varphi^\pitchfork (\calt_1)$, we have: 
$$
\Phi : ((\calt_1,\calt_2),(\calt_2,\calt^\pitchfork_2)) \in \hat{\calr}_\infty \! \mid_{\partial \calr_\infty} \!\!  \ast \, \calk_2 
\mapsto 
((\calt_1,\calt^\pitchfork_1),(\calt^\pitchfork_1,\calt^\pitchfork_2)) \in \calk_2 \ast \hat{\calr}_\infty \! \mid_{\partial \calr_\infty}
$$
becomes a homeomorphim between the bisections of $\hat{\calr}_\infty \! \mid_{\partial \calr_\infty} \!\!  \ast \, \calk_2$ and $\calk_2 \ast \hat{\calr}_\infty \! \mid_{\partial \calr_\infty}$ defined by $\varphi$ and $\varphi^\pitchfork$, which extends naturally to a global topological isomorphism. 
\end{proof}

By applying the Absorption Theorem of \cite{GMPS2}, see Theorem~\ref{thabsorption}, we obtain the following result: 

\begin{proposition} \label{firstabsorption} The inductive limit 
$\hat{\calr} _\infty \vee \calk_2 = \varinjlim \hat{\calr} _n \vee \calk_2$ is a minimal $AF$ equivalence relation OE to the 
open $AF$ equivalence subrelation $\hat{\calr} _\infty$ of $\calr _\infty$. \qed
\end{proposition}

\subsection{Constructing $\calk_3$}
When we replace $\hat{\calr} _\infty$ with $\hat{\calr} _\infty \vee \calk_2$, all the elements of $\partial_2 \calr _\infty$ are absorbed, but some $\calr$-classes still split into several $\hat{\calr}_\infty$-equivalence classes because double-colored base points in $Y_2$ are never absorbed using $\calk_2$. 
From Proposition~\ref{semi-continuous}, the set $\{ \calt \in \partial \calr_\infty | D(\calt) \geq 3 \}$ is a closed subset of $\cpartial \calr_\infty$, which contains the fundamental domains $\check{\calh}_3$ and $\check{\calh}_4$ constructed in Propositions~\ref{H3},~\ref{H40}~and~\ref{H4k}. From Propositions~\ref{H40}~and~\ref{H3closed}, we know that $\check{\calh}_{4,0}$ and $\check{\calh}_3$ are closed subset of $\cpartial \calr_\infty$. On the other hand, when we construct $\check{\calh}_4$ in the proof of Proposition~\ref{H4k}, we can choose the configuration of the roots that belong to $\check{\calh}_4$. For example, belonging to a vertical semi-ribbon pointing upward or a horizontal semi-ribbon pointing to the right. This allows us to realize the union of $\check{\calh}_3$ and $\check{\calh}_4$ as a closed subset $\check{Y}_3$ of $\cpartial \calr_\infty$. Indeed, if  $\calt_n$ is a sequence in $\check{Y}_3$ that converges to  $\calt$, then $\calt$ belongs to the set $\{ \calt \in \partial \calr_\infty | D(\calt) \geq 3 \} \supset \check{\calh}_3 \cup \check{\calh}_4$ according to Proposition~\ref{semi-continuous}. Repeating the proof of Proposition~\ref{H3closed}, we deduce that  $\calt \in \check{Y}_3$. 
\medskip 

Now, combining Remark~\ref{remarkfd} with the discussion preceding Proposition~\ref{minimal'}~and~\ref{R'etale}, we can derive another closed set $Y_3 \subset Y_2 = \partial \calr_\infty$ in the following way. Each element in $ \check{Y}_3$ determine four double-colored base points in $X$ providing four different closed fundamental domains for $\calr$ and $\hat{\calr} _\infty \vee \calk_2$ . If the root belongs to $\check{\calh}_3$, then two base points are 
$\calr _\infty$-related, whereas the two other base points are not $\calr _\infty$-related between them. But recall that each base point is doubled and encoded by two color codes in $\calc$ at each stage of the inflation process. These color codes are equal for each of the two base points that are $\calr _\infty$-related, see Figure~\ref{calr'n.b}. For each root in $\check{\calh}_{4,\ell}$ with $\ell > 0$, the local situation around the root is the same than in the case above. Finally, for each root in $\check{\calh}_{4,0}$, the four base points, which are never $\calr_\infty$-related, are doubled and colored at each stage of the inflation process. Thus, we shall denote by $Y_3$ the union of two of these four closed fundamental domains whose elements are double-encoded by two different colors in $\calc$. This is a closed subset $Y_3$ of $Y_2 = \partial \calr_\infty$ verifying:

\begin{proposition} \label{Y3etale}
The set $Y_3$ is $\hat{\calr} _\infty \vee \calk_2$-thin and $\hat{\calr} _\infty \vee \calk_2$-\'etale.
\end{proposition} 

\begin{proof}
Firstly, the $\hat{\calr} _\infty \vee \calk_2$-thinness of $Y_3$ is an easy consequence of Proposition~\ref{R'thin}. On the other hand, since $Y_3$ is the disjoint union of two fundamental domains for $\hat{\calr} _\infty \vee \calk_2$, it is clear that $\hat{\calr} _\infty \vee \calk_2$ is \'etale on $Y_3$. 
\end{proof}

Now, we have an involutive homeomorphism from $Y_3$ onto itself changing the color code of each element of $Y_3$ and respecting each of the two fundamental domains in $Y_3$. This generates a CEER  $\calk_3$ on $Y_3$ which is not transverse to $\hat{\calr} _\infty \vee \calk_2 |_{Y_3}$. But restricting this involutive homeomorphism to each fundamental domain and extending trivially the corresponding CEERs to $Y_3$, we obtain two CEERs on $Y_3$ which are transverse to $\hat{\calr} _\infty \vee \calk_2 |_{Y_3}$ and whose union generates $\calk_3$. Using twice the Absorption Theorem of \cite{GMPS2}, see Theorem~\ref{thabsorption}, we have: 

\begin{proposition} \label{secondabsorption}
 The equivalence relation 
$\hat{\calr} _\infty \vee \calk_2 \vee \calk_3$ is a minimal $AF$ equivalence relation OE to $\hat{\calr} _\infty$. \qed
\end{proposition}

\subsection{Constructing $\calk_4$} Now, when we replace $\hat{\calr} _\infty \vee \calk_2$ by $\hat{\calr} _\infty \vee \calk_2 \vee \calk_3$, 
all the elements of $\partial_3 \calr _\infty$ are also absorbed, but there is a copy of $\check{\calh}_4$ that remains unabsorbed. In the final step of the absorption procedure, we shall consider again the union $Y_4$ of two copies of the fundamental domain $\calh_4$ constructed in Remark~\ref{remarkfd}. We choose these domains in such way that each base point around the second root is doubled by attaching two different color codes in $\calc$. We still have an involutive homeomorphism from $Y_4$ onto itself changing the color code of each element of $Y_4$. This generates a CEER $\calk_4$ on $Y_4$ which is not transverse to $\hat{\calr} _\infty \vee \calk_2 \vee \calk_3$. But $\calk_4$ is generated by the union of two CEERs on $Y_4$ which are transverse to $\hat{\calr} _\infty \vee \calk_2 \vee \calk_3$. In this case, we need to restrict the involutive homeomorphism to a complete copy of $\calh_4$  in the first case, and to a copy of $\calh_4 - \calh_{4,0}$ in the second case. Next, we extend trivially the corresponding CEERs to $Y_4$. In order to apply the Absorption Theorem of \cite{GMPS2}, we need to restrict $\calk_4$ to the increasing sequence of closed subsets $Y_{4,0} \cup Y_{4,1} \cup \dots \cup Y_{4,\ell}$ of $Y_4$ (all of whose elements are encoded by two different colors) derived from the fundamental domains $\check{\calh}_{4,0} \cup \check{\calh}_{4,1} \dots \cup \check{\calh}_{4,\ell}$ as before:  

\begin{proposition} \label{thirdabsorption} 
For all $\ell \geq 0$, the equivalence relation
$$\hat{\calr} _\infty \vee \calk_2  \vee \calk_3 \vee
 \calk_4 |_{\textstyle Y_{4,0} \cup Y_{4,1} \cup \dots \cup Y_{4,\ell}}$$
 is a minimal $AF$ equivalence relation OE to $\hat{\calr} _\infty$. \qed
\end{proposition}

\noindent
Finally, since $\calr = \hat{\calr} _\infty \vee \calk_2 \vee \calk_3 \vee \calk_4$, Theorem~\ref{affabilitytheorem} follows 
as $\calr$ can be described as the direct limit 

\[
  \calr = \varinjlim \hat{\calr} _\infty \vee \calk_2  \vee \calk_3 \vee  \calk_4 |_{\textstyle Y_{4,0} \cup Y_{4,1} \cup \dots \cup Y_{4,\ell}}
\]
which should be affable. Note, however, that this last step does not allow us to assert that $\calr$ is OE to  $\hat{\calr} _\infty$.

\end{document}